\numberwithin{equation}{section}
\numberwithin{table}{section} 
\newcommand{\eref}[1]{\emph{\ref{#1}}}
\newcommand{\hsp}[1]{{\hbox{\hspace{#1}}}}
\newcounter{letcnt1} 
\newcounter{letcnt2} 
\newcounter{talkcnt} 
\def\a{\alpha}  
\def\b{\beta}  
\def\m{\mu}
\def\w{\omega}
\def\ttA{\mathtt{A}}
\def\tAd{\mathrm{Ad}} \def\tad{\mathrm{ad}}
\def\tAut{\mathrm{Aut}}
\def\bC{\mathbb C}
\def\cD{\mathcal D}
 \def\tdim{\mathrm{dim}}
 \def\ttE{\mathtt{E}}
\def\tEnd{\mathrm{End}} 
 \def\fe{\mathfrak{e}}
\def\ff{\mathfrak{f}}
\def\tFlag{\mathrm{Flag}}
  \def\bfG{\mathbf{G}} 
\def\cG{\mathcal G} 
\def\tGr{\mathrm{Gr}}
\def\fg{{\mathfrak{g}}}
 \def\sH{\mathscr{H}}
\def\fh{\mathfrak{h}} 
\def\bh{\mathbf{h}}
\def\bi{\mathbf{i}} 
\def\tti{\mathtt{i}}
\def\tId{\mathrm{Id}}
\def\fk{\mathfrak{k}}
\def\cM{\mathcal M}
\def\tO{\mathrm{O}} 
\def\fo{\mathfrak{o}} 
\def\bP{\mathbb P} \def\cP{\mathcal P}
\def\fp{\mathfrak{p}}
\def\bQ{\mathbb Q} \def\cQ{\mathcal Q}
\def\bR{\mathbb R}
\def\tss{\mathrm{ss}}
 \def\tSO{\mathrm{SO}}
\def\tSp{\mathrm{Sp}}
\def\tStab{\mathrm{Stab}} \def\tSym{\mathrm{Sym}}
 \def\tspan{\mathrm{span}}
\def\fsl{\mathfrak{sl}} \def\fso{\mathfrak{so}} 
\def\fsp{\mathfrak{sp}} \def\fsu{\mathfrak{su}}
 \def\ttT{\mathtt{T}}
  \def\tU{\mathrm{U}}
\def\fu{\mathfrak{u}}
   \def\bZ{\mathbb Z}
\def\fz{\mathfrak{z}} 
\def\half{\tfrac{1}{2}}
\def\fourth{\tfrac{1}{4}}
\def\tand{\quad\hbox{and}\quad}
\def\smallb{{\hbox{\small{$\bullet$}}}}
\def\inj{\hookrightarrow}
\def\op{\oplus}
\def\ot{\otimes}
\def\tw{\hbox{\small $\bigwedge$}}
\newenvironment{a_list}
  {\begin{enumerate}[label=(\alph*),itemsep=3pt,leftmargin=25pt,listparindent=20pt]}
  {\end{enumerate}}
\newenvironment{A_list}
  {\begin{enumerate}[label=(\Alph*),itemsep=3pt,leftmargin=25pt,listparindent=20pt]}
  {\end{enumerate}}
\newenvironment{numlist}
  {
  \begin{enumerate}[itemsep=3pt,leftmargin=25pt,listparindent=20pt]
  }
  {\end{enumerate}}
\newenvironment{i_list}
  {\begin{enumerate}[label=(\roman*),itemsep=3pt,leftmargin=25pt,listparindent=20pt]}
  {\end{enumerate}}
\newenvironment{i_list_emph}
  {\begin{enumerate}[label=\emph{(\roman*)},itemsep=3pt,leftmargin=25pt,listparindent=20pt]}
  {\end{enumerate}}
\newtheorem{proposition}[equation]{Proposition}
\newtheorem{theorem}[equation]{Theorem}
\newtheorem*{theorem*}{Theorem}
\theoremstyle{definition}
\newtheorem*{boldQ*}{Question}
\newtheorem*{boldP*}{Problem}
\theoremstyle{definition}
\theoremstyle{remark}
\newtheorem*{assume*}{Assume}
\newtheorem*{answer*}{Answer}
\newtheorem*{claim*}{Claim}
\newtheorem{definition}[equation]{Definition}
\newtheorem*{definition*}{Definition}
\newtheorem{example}[equation]{Example}
\newtheorem*{example*}{Example}
\newtheorem*{hint*}{Hint}
\newtheorem*{notation*}{Notation}
\newtheorem{remark}[equation]{Remark}
\newtheorem*{remark*}{Remark}
\newtheorem*{remarks*}{Remarks}
\newtheorem*{fact*}{Fact}
\newtheorem*{emphQ*}{Question}
\newtheorem*{emphA*}{Answer}
\begin{document}
\title{Hodge representations}
\author[Han]{Xiayimei Han}
\email{xiayimei.han@duke.edu}
\author[Robles]{Colleen Robles}
\email{robles@math.duke.edu}
\address{Mathematics Department, Duke University, Box 90320, Durham, NC  27708-0320} 
\thanks{Robles is partially supported by NSF grants DMS 1611939 and 1906352.}
\date{\today}
\begin{abstract}
Hodge representations were introduced by Green--Griffiths--Kerr to classify the  Hodge groups of polarized Hodge structures, and the corresponding Mumford--Tate subdomains of a period domain.  The purpose of this article is to provide an exposition of how, given a fixed period domain $\cD$, to enumerate the Hodge representations corresponding to Mumford--Tate subdomains $D \subset \cD$.  After reviewing the well-known classical cases that $\cD$ is Hermitian symmetric (weight $n=1$, and weight $n=2$ with $p_g = h^{2,0}=1$), we illustrate this in the case that $\cD$ is the period domain parameterizing polarized Hodge structures of (effective) weight two Hodge structures with first Hodge number $p_g = h^{2,0} = 2$.  We also classify the Hodge representations of Calabi--Yau type, and enumerate the horizontal representations of CY 3-fold type.  (The ``horizontal'' representations those with the property that corresponding domain $D \subset \cD$ satisfies the infinitesimal period relation, a.k.a.~Griffiths' transversality, and is therefore Hermitian.) 
\end{abstract}
\keywords{Hodge theory}
\subjclass[2010]
{
 58A14 
}
\maketitle

\subsection*{Conflicts of Interest}  The authors declare none.
\subsection*{Data Availability Statement}  All data present or explicitly referenced.

\setcounter{tocdepth}{1}
\let\oldtocsection=\tocsection
\let\oldtocsubsection=\tocsubsection
\let\oldtocsubsubsection=\tocsubsubsection
\renewcommand{\tocsection}[2]{\hspace{0em}\oldtocsection{#1}{#2}}
\renewcommand{\tocsubsection}[2]{\hspace{3em}\oldtocsubsection{#1}{#2}}
\tableofcontents 

\section{Introduction}

\subsection{Hodge groups}

Fix a period domain $\cD = \cD_\bh = \cG_\bR/\cG_\bR^0$ parameterizing $Q$--polarized Hodge structures on a rational vector space $V$ with Hodge numbers $\bh = (h^{n,0},\ldots,h^{0,n})$.  Here 
\[
  \cG_\bR \ = \ \tAut(V_\bR,Q)
\]
is either an orthogonal group $\tO(a,2b)$ (if $n$ is even) or a symplectic group $\tSp(2r,\bR)$ (if $n$ is odd), and $\cG_\bR^0$ is the compact stabilizer of a fixed $\varphi \in \cD$.  To each Hodge structure $\varphi \in \cD$ is associated a ($\bQ$--algebraic) Hodge group $\bfG_\varphi \subset \tAut(V,Q)$, and a Mumford--Tate domain $D = D_\varphi = G_\varphi \cdot \varphi \subset \cD$, where $G_\varphi = \bfG_\varphi(\bR)$.  Briefly, the Hodge structure $\varphi \in \cD$ determines a homomorphism of $\bR$--algebraic groups $\varphi : S^1 \to \tAut(V_\bR,Q)$, and the Hodge group $\bfG_\varphi$ is the $\bQ$--algebraic closure of $\varphi(S^1)$.  The Hodge group $\bfG_\varphi$ may be equivalently defined as the stabilizer of the Hodge tensors of $\varphi$.

\subsection{Motivations} \label{S:mot}
The geometric considerations motivating a classification of the Hodge groups for a given period domain $\cD$ include the following.  For generic choice of $\varphi \in D$, the Hodge group $\bfG_\varphi$ is the full automorphism group $\tAut(V,Q)$.  So when containment $\bfG_\varphi \subsetneq \tAut(V,Q)$ is strict, the Hodge structure has nongeneric Hodge tensors.  (And, because $\bfG_{\varphi'} \subset \bfG_\varphi$ for all $\varphi' \in D_\varphi$, the Mumford--Tate domain $D_\varphi$ will parameterize Hodge structures with nongeneric Hodge tensors.)  An extreme example here is the case that $D_\varphi$ is a point $\{\varphi\}$; this is the case if and only if $\bfG_\varphi$ is a torus; equivalently, $\tEnd(V,\varphi)$ is a CM field.  When containment $\bfG_\varphi \subsetneq \tAut(V,Q)$ is strict and the Hodge structure is realized by the cohomology of an algebraic variety, the variety ``should'' admit nongeneric arithmetic properties (such as extra Hodge classes, or automorphisms, et cetera). In general, the Hodge group can have significant geometric consequences; for example, it plays a key role in Ribet's study \cite{MR701568} of the Hodge conjecture for principally polarized abelian varieties (expanding upon earlier work of Tanke'ev's \cite{MR631439, MR643899}).

Likewise much geometric motivation for the classification of the Mumford--Tate domains comes from the moduli of algebraic varieties.   In general, the period domain is not Hermitian.  Two significant exceptions are the period domains arising when considering moduli spaces of principally polarized abelian varieties and K3 surfaces.  The Hermitian  symmetric structure of $\cD$ in these two cases, along with global Torelli theorems, is the underlying structure that has made Hodge theory such a powerful tool in the study of these moduli spaces and their compactifications \cite{MR3495110}.  Even when the period domain $\cD$ is not Hermitian, it may contain Hermitian symmetric Mumford--Tate subdomains $D$.  (For example, every horizontal subdomain is Hermitian symmetric.)  Given a moduli space $\cM$ geometrically realizing $D$ as a Mumford--Tate domain (with a Torelli theorem), Hodge theory is again a significant tool in the study of $\cM$ and its compactifications, cf.~\cite{MR1910264, MR2789835, MR1416355, MR2807280, MR1780433, MR3886178, MR3914747, MR2510071, MR1265318}.  Reciprocally, given a Hermitian symmetric Mumford--Tate domain $D \subset \cD$ it is a very interesting problem to find geometric (or motivic) realizations of the domain; work in this direction includes \cite{MR3474815, MR3914747, MR3193750, MR3337677, MR3350113}.

\subsection{Objective and approach} \label{S:approach}

The principal goal of this paper is to present an expository discussion of the Green--Griffiths--Kerr \cite{MR2918237} prescription to identify the real algebraic groups $G_\varphi = \bfG_\varphi(\bR)$ that may arise.  More precisely, Green--Griffiths--Kerr identify the underlying real Lie algebra $\fg_\bR$.  This determines $G_\varphi$ to finite data, and suffices to identify the domains $D_\varphi$ as intrinsic $G^\tad_\varphi$--homogeneous spaces.  (See \cite{MR3513533} for the classification of general $\bfG_\varphi$.)

\begin{example}\label{eg:wt1i}
The case of weight one Hodge representations is classical \cite{MR546620, MR2192012}: The real form $\fg_\bR$ is one of $\fsp_{2r}\bR$, $\fu(a,b)$, $\fsu(a,a)$, $\fso(2,m)$ and $\fso^*(2r)$. See Example \ref{eg:wt1} for the corresponding Hodge representations.
\end{example}

\begin{remark}
We are aware of only a few cases in which the classification of the $\bfG_\varphi$ as $\bQ$--algebraic groups has been completely worked out.  These include Zarhin's classification \cite{MR697317} of the Hodge groups of K3 surfaces (see Example \ref{eg:K3} for the corresponding Hodge representations), and Green--Griffiths--Kerr classification \cite[\S7]{MR2918237} for period domains $\cD$ with Hodge numbers $\bh = (2,2)$ and $\bh = (1,1,1,1)$.
\end{remark}

Green--Griffiths--Kerr \cite{MR2918237} showed that the Hodge groups $
\bfG = \bfG_\varphi$ and Mumford--Tate domains $D = D_\varphi \subset \cD_\bh$ are in bijection with Hodge representations
\[
  S^1 \ \stackrel{\phi}{\longrightarrow} \ G_\bR \,,\quad
  \bfG \ \inj \ \tAut(V,Q) \,,
\]
with Hodge numbers $\bh_\phi \le \bh$.  (Given $\bh_1 = ( h_1^{n,0} , \ldots , h_1^{0,n})$ and $\bh_2 = ( h_2^{n,0} , \ldots , h_2^{0,n})$, we write $\bh_1 \le \bh_2$ if $h_1^{p,q} \le h_2^{p,q}$ for all $p,q$.)  Effectively one may say that \emph{the Hodge groups and Mumford--Tate domains are classified by the Hodge representations}: given a fixed $\cD = \cD_\bh$ (with specified Hodge numbers $\bh$), one identifies all possible Hodge domains $D \subset \cD$ by enumerating the Hodge representations with $\bh_\phi \le \bh$.  

\begin{remark} \label{R:obvious}
There are some obvious subdomains that can be identified without Hodge representations: (products of) period subdomains.  If $\cD_i$ is the period domain for Hodge numbers $\bh_i$ and $\bh_1 + \cdots + \bh_\ell \le \bh$, then $\cD_1\times\cdots\times \cD_\ell$ is a Mumford--Tate subdomain of $\cD$.
\end{remark}

Green--Griffiths--Kerr's characterization of the Hodge representations is formulated as Theorem \ref{T:strategy}, which asserts that the induced (real Lie algebra) Hodge representations 
\begin{equation}\label{E:iHrep}
  \bR \ \to \ \fg_\bR \ \to \ \tEnd(V_\bR,Q)
\end{equation}
are enumerated by tuples $(\fg^\tss_\bC,\ttE^\tss,\mu,c)$ consisting of:
\begin{i_list}
\item a semisimple complex Lie algebra $\fg^\tss_\bC = [\fg_\bC,\fg_\bC]$, 
\item an element $\ttE^\tss \in \fg^\tss_\bC$ with the property that $\tad\,\ttE^\tss$ acts on $\fg^\tss_\bC$ diagonalizably with integer eigenvalues, 
\item a highest weight $\mu$ of $\fg^\tss_\bC$, and 
\item a constant $c \in \bQ$ satisfying $\mu(\ttE^\tss) + c \in \half\bZ$.
\end{i_list}
The real form $\fg_\bR^\tss$ is the Lie algebra of the image $G^\tad_\varphi$ of $\tAd : G_\varphi \to \tAut(\fg_\bR)$.  We have $D_\varphi = G^\tad_\varphi \cdot \varphi$, and $\ttE^\tss$ is essentially equivalent to the isotropy group $\tStab_{G^\tad_\varphi}(\varphi)$.

\subsection{Examples and special cases}
\subsubsection{Horizontal Hodge domains}

As discussed above (\S\ref{S:mot}) the identification of the horizontal subdomains is of particular interest.  These are the domains that satisfy the infinitesimal period relation (IPR, a.k.a.~Griffiths' transversality).  It is well-known that horizontal subdomains are necessarily Hermitian, and as such their structure as intrinsic homogeneous complex manifolds is classical and well-understood.  These results are reviewed in \S\ref{S:horiz}.  The Hodge representations with horizontal $D_\phi$ are characterized in Proposition \ref{P:horiz}.

\subsubsection{Weight two Hodge representations}

In \S\ref{S:eg} we apply the prescription of Theorem \ref{T:strategy} to identify all Hodge representations and Mumford--Tate subdomains $D$ of the period domain $\cD$ parameterizing $Q$--polarized, (effective) weight $n=2$ Hodge structures with $p_g = h^{2,0} = 2$ (Theorems \ref{T:pg=1}, \ref{T:pg=2}, \ref{T:notH} and Theorem \ref{T:ss}).  This period domain is chosen as our primary example for two reasons.  First, it is in a certain sense the simplest example of a period domain that is not Hermitian symmetric.  (The infinitesimal period relation is a contact subbundle of $T \cD$.)  Second, it is the period domain arising when considering families of Horikawa surfaces \cite{MR501370, MR517773}, in which there has been much interest recently \cite{MR3383166, MR3636379, MR3914747}.

\subsubsection{Hodge representations of Calabi--Yau type} 

Hodge representations of CY-type (those with first Hodge number $h^{n,0} = 1$) are of considerable interest and have been studied by several authors, including \cite{MR3102477, MR3189469, MR1258484, MR2657440}.  Much of this work is over $\bR$, but Friedman and Laza \cite{MR3189469} have identified some rational forms $\bfG_\varphi(\bQ)$ admitting Hodge representations of CY 3-fold type.  In \S\ref{S:CY} we classify the (Lie algebra) Hodge representations of CY-type (Theorem \ref{T:CY}).  The CY-Hodge representations with $D$ Hermitian are well-known, and those with $\fg_\bR$ semisimple have been classified \cite[Proposition 6.1]{MR3217458}; so the content of Theorem \ref{T:CY} is to drop the hypothesis that $\fg_\bR$ be semisimple from the classification.  This result is used in \cite{HHan} to enumerate the set of all Hodge representations of CY 3-fold type.  Those with horizontal (and therefore Hermitian) domain $D \subset \cD$ are listed in Example \ref{eg:CY3}.

\section{Hodge representations}\label{S:Hrep}

What follows is a laconic review of the necessary background material on Hodge representations.  References for more detailed discussion include \cite{MR2918237}, \cite[\S\S2--3]{MR3217458} and \cite[\S\S2--3]{MR3505643}.

\subsection{Basics} \label{S:basics}
 
Let
\begin{equation}\label{E:Hrep}
  \phi : S^1 \to G_\bR \tand
  G_\bR \ \to \ \tAut(V_\bR,Q)
\end{equation}
be the data of a (real) Hodge representation \cite{MR2918237}.  Without loss of generality, we may suppose that the induced Lie algebra representation
\begin{equation} \label{E:HrepLA}
   \fg_\bR \ \inj \ \tEnd(V_\bR,Q)
\end{equation}
is faithful.  The associated Hodge decomposition
\[
  V_\bC \ = \ \bigoplus V^{p,q}_\phi
\]
is the $\phi$--eigenspace decomposition; that is,
\[
  V^{p,q}_\phi \ = \ \{ v \in V_\bC \ | \ \phi(z)(v) = z^{p-q} \, v \,,\ 
  \forall \ z \in S^1 \} \,.
\]
The associated grading element $\ttE_\phi\in\bi\fg_\bR$ (or infinitesimal Hodge structure) \cite{MR3217458} is defined by $\ttE_\phi(v) = \half(p-q) v$ for all $v \in V^{p,q}_\phi$; that is, $\ttE_\phi \in \tEnd(V_\bC)$ is defined so that $V^{p,q}_\phi$ is the $\ttE_\phi$--eigenspace with eigenvalue $\half(p-q)$; for this reason it is sometimes convenient to write 
\[
  V^{p,q} \ = \ V_{(p-q)/2} \,.
\]  

\begin{remark}
Together the grading element $E$ and Lie algebra representation \eqref{E:HrepLA} determine the group representation \eqref{E:Hrep} up to finite data.
\end{remark}  

\begin{definition}
We call the the pair $(\fg_\bR \inj \tAut(V_\bR,Q) \,,\, \ttE)$ the data of a \emph{real, Lie algebra Hodge representation} ($\bR$-LAHR).
\end{definition}

\begin{remark}
A key point here is that a Hodge representation \eqref{E:Hrep} determines a grading element $\ttE_\phi \in \bi \fg_\bR$.  Conversely a complex reductive Lie algebra $\fg_\bC$, a grading element $\ttE \in \fg_\bC$ determines both a real form $\fg_\bR$ (\S\ref{S:gerf}) and a Hodge representation (\S\ref{S:gehr}).
\end{remark}

Notice that $\phi$ is a level $n$ Hodge structure on $V_\bR$ if and only if the $\ttE_\phi$--eigenspace decomposition is  
\begin{equation} \label{E:V}
  V_\bC \ = \ V_{n/2} \,\op\, V_{n/2-1} \,\op\cdots\op\, 
  V_{1-n/2} \,\op\, V_{-n/2} \,.
\end{equation}

\begin{remark} \label{R:phinot0}
The Hodge structure is of level zero (equivalently, $V_\bC = V_\phi^{0,0}$) if and only if $\phi$ is trivial.  We assume this is not the case.
\end{remark}

\begin{remark}[Period domains] \label{R:pd}
The Hodge domain $D$ determined by \eqref{E:Hrep} is a period domain if and only if $G_\bR = \tAut(V_\bR,Q)$.
\end{remark}

\subsection{Induced Hodge representation} \label{S:induced}

There is an induced Hodge representation on the Lie algebra $\fg_\bR$.  Define 
\[
  \fg^{\ell,-\ell}_\phi \ := \ 
  \{ \xi \in \fg_\bC \ | \ \xi(V^{p,q}_\phi) \subset V^{p+\ell,q-\ell}_\phi \,,\
   \forall \ p,q\}\,.
\]
Then 
\begin{equation}\label{E:ind}
 \fg_\bC \ = \ \bigoplus_{\ell\in\bZ} \fg^{\ell,-\ell}_\phi
\end{equation}
is a weight zero Hodge structure on $\fg_\bR$ that is polarized by $-\kappa$, with $\kappa$ the Killing form.  

The Jacobi identity implies 
\[
  [\fg^{k,-k}_\phi \,,\, \fg^{\ell,-\ell}_\phi] \ \subset \ \fg^{k+\ell,-k-l}_\phi \,.
\]
The subalgebra
\[
  \fg_{\phi,\bC}^\mathrm{even} \ := \ \bigoplus_{\ell \in \bZ} \fg^{2\ell,-2\ell}_\phi
\]
is the complexification $\fk_{\phi} \ot_\bR \bC$ of the (unique) maximal compact subalgebra $\fk_\phi \subset \fg_\bR$ containing the Lie algebra $\fg_\bR^0 = \fg_\bR \cap \fg^{0,0}_\phi$ of the stabilizer/centralizer $G_\bR^0$ of $\phi$.

\subsection{Grading elements}\label{S:ge}

Hodge structures are closely related to grading elements.  This relationship is briefly reviewed here; see \cite[\S\S2--3]{MR3217458} and \cite[\S\S2--3]{MR3505643} for details.

\begin{remark} \label{R:ge}
Here grading elements are essentially linearizations of the circle $\phi : S^1 \inj G_\bR$ in the Hodge representation \eqref{E:Hrep}.  The essential observation of this section is that the data $(\fg_\bC,\ttE)$ determines the real form $\fg_\bR$, and the Hodge domain and compact dual $D \subset \check D$ (as intrinsic homogeneous spaces.  They are not represented as subdomains of a period domain $\cD$ until we select the second half $G_\bR \inj \tAut(V_\bR,Q)$ of the Hodge representation \eqref{E:Hrep}).
\end{remark}

\subsubsection{Definition}\label{S:gedfn}

Fix a complex reductive Lie algebra $\fg_\bC$.  A \emph{grading element} is any element $\ttE \in \fg_\bC$ with the property that $\tad(\ttE) \in \tEnd(\fg_\bC)$ acts diagonalizably on $\fg_\bC$ with integer eigenvalues; that is,
\begin{equation}\label{E:ghs}
  \fg_\bC \ = \ \bigoplus_{\ell \in\bZ} \fg^{\ell,-\ell} \,,\quad\hbox{with}\quad
  \fg^{\ell,-\ell} \ = \ \{ \xi \in \fg_\bC \ | \ [\ttE , \xi] = \ell\,\xi \} \,.
\end{equation}

\begin{remark}
The notation $\fg^{\ell,-\ell}$ is meant to be suggestive.  The grading element $\ttE$ determines a weight zero (real) Hodge decomposition that is polarized by $-\kappa$, with $\kappa$ the Killing form (\S\ref{S:gehr}).
\end{remark}

\begin{remark}
The data $(\fg_\bC,\ttE)$ determines a parabolic subgroup $P_\ttE \subset G_\bC$ with Lie algebra $\fp_\ttE = \op_{\ell\ge0}\,\fg^{\ell,-\ell}$.  The resulting generalized grassmannian $\check D = G_\bC/P_\ttE$ (or rational homogeneous variety) is the compact dual of the Hodge domain (as an intrinsic homogeneous space).
\end{remark}

\subsubsection{Grading elements versus real forms}\label{S:gerf}

Fix a complex reductive Lie algebra $\fg_\bC$.   Given $\fg_\bC$ and $\ttE$, there is a unique real form $\fg_\bR$ of $\fg_\bC$ such that \eqref{E:ghs} is a weight zero Hodge structure on $\fg_\bR$ that is polarized by $-\kappa$ \cite[\S3.1.2]{MR3505643}.  The real form $\fg_\bR$ is determined by the condition that $\op_\ell\,\fg^{2\ell,-2\ell}$ is the complexification $\fk_\bC$ of a maximal compact subalgebra $\fk \subset \fg_\bR$.  

See \S\ref{S:horiz} for a discussion of the examples that of the most interest here.

\subsubsection{Grading elements versus Hodge representations} \label{S:gehr}

Given the data of \S\ref{S:gerf}, the grading element $\ttE$ acts on any representation $G_\bR \to \tAut(V_\bR)$ by rational eigenvalues.  The $\ttE$--eigenspace decomposition $V_\bC = \op_{k\in\bQ}\,V_{k}$ is a Hodge decomposition (polarized by some $Q$), with $V^{p,q} = V_{(p-q)/2}$ as in \S\ref{S:basics}, if and only if those eigenvalues lie in $\half \bZ$ \cite{MR2918237}.  The corresponding Hodge representation is given by the circle $\phi : S^1 \to G_\bR$ defined by 
\[
  \phi(z) v \ := \ z^{p-q}\,v \,,\quad 
  z \in S^1 \,,\ v \in V^{p,q} = V_{(p-q)/2} \,.
\]
Note that $\ttE = \ttE_\phi$.

\subsubsection{Normalization of grading element} \label{S:norm-ge}

The Lie algebra $\fg$ of $G$ is reductive.  Let 
\begin{equation} \label{E:gvgss}
  \fg \ = \ \fz \op \fg^\tss
\end{equation}
denote the decomposition of $\fg$ into its center $\fz$ and semisimple factor $\fg^\tss = [\fg,\fg]$.  Let $\ttE = \ttE' + \ttE^\tss$ be the decomposition given by \eqref{E:gvgss}.  The Hodge domain $D$ is determined by $\fg^\tss$ and $\ttE^\tss$. 

Fix a Cartan subalgebra $\fh \subset \fg_\bC$ that contains $\ttE_\phi$, is contained in $\fg^{0,0}_\phi$ and that is defined over $\bR$.  Then 
\[
  \fh \ = \ \fz \,\op\,\fh^\tss \,,
\]
where $\fh^\tss = \fh \cap \fg^\tss_\bC$ is a Cartan subalgebra of $\fg^\tss_\bC$.  Choose simple roots $\{ \a_1,\ldots,\a_r\} \in (\fh^\tss)^*$ of $\fg^\tss_\bC$ so that $\a_j(\ttE^\tss) \ge 0$, for all $j$.  Without loss of generality, we may assume that $\a_j(\ttE) \in \{0,1\}$.\cite[\S3.3]{MR3217458}\footnote{There is a typo in \cite[Proposition 3.4]{MR3217458}: in general one may assert only that the group $F$ is $\bR$--algebraic (not $\bQ$--algebraic).}  This is equivalent to the condition that the infinitesimal period relation $T^hD \subset TD$ is bracket--generating; equivalently, $\fg^{1,-1}$ generates $\fg^{+,-} = \op_{\ell>0}\,\fg^{\ell,-\ell}$ as a Lie algebra.

\subsection{Reduction to irreducible $V$} \label{S:red2irred}

If $V_\bR = V_1 \op V_2$ is reducible as a real representation, then the associated domain $D$ factors $D = D_1 \times D_2$ into the product of the domains $D_i$ for the $V_i$.  So without loss of generality we may assume that $V_\bR$ is irreducible.  The Schur lemma (and our hypothesis that \eqref{E:HrepLA} is faithful) implies
\begin{equation} \label{E:z}
  \tdim\,\fz \ \in \ \{ 0 , 1 \} \,.
\end{equation}

\begin{remark}\label{R:z}
Note that $\fz = \tspan\{\ttE'\}$, so that $\fg = \fg^\tss$ if and only if $\ttE'=0$.
\end{remark}

Given an irreducible real representation $V_\bR$ there exists a (unique) irreducible representation $U$ of $G_\bC$ such that one of the following holds:
\begin{equation} \label{E:VvU}
  V_\bR \ot \bC \ = \ 
  \left\{ \begin{array}{ll}
  U \tand U = U^* & (\hbox{$U$ is \emph{real} w.r.t.~$\fg_\bR$})\,, \\
  U\op U^* \tand U \not= U^* & (\hbox{$U$ is \emph{complex} w.r.t.~$\fg_\bR$})\,,\\
  U\op U^* \tand U = U^* & (\hbox{$U$ is \emph{quaternionic} w.r.t.~$\fg_\bR$})\,.
  \end{array} \right.
\end{equation}
Let $\mu , \m^* \in \fh^*$ denote the highest weights of $U$ and $U^*$ respectively.  When we wish to emphasize the highest weight of $U$, we will write $U = U_\mu$. 

\begin{remark}[Period domains]
In this case of Remark \ref{R:pd}, we have $V_\bC = U_{\w_1}$, with $\mu = \w_1$ the first fundamental weight.
\end{remark}

\begin{remark} \label{R:mu}
It follows from Remark \ref{R:z} that action of the center $\fz \subset \fg_\bR$ on $V_\bR$ is determined by the action of $\ttE'$.  The latter acts on $U$ by scalar multiplication by $c = \mu(\ttE') \in \bQ$.  In particular, $\fz \not=0$ if and only if $c\not=0$.  Moreover, $\ttE'$ necessarily acts on the dual by $-c = \mu^*(\ttE')$.  So $\mu \not= \mu^*$, and $U$ is complex with respect to $\fg_\bR$ whenever $\fg_\bR$ has a nontrivial center ($\fz \not=0$).
\end{remark}

\subsection{Real, complex and quaternionic representations} \label{S:rcq}

Note that $U$ is complex if and only if $\mu \not=\mu^*$.  By Remark \ref{R:mu}, this is always then case when $\fz \not=0$; equivalently, $\fg \not=\fg^\tss$.  When $\fz = 0$ (equivalently, $\fg = \fg^\tss$ is semisimple) the real and quaternionic representations may be distinguished as follows.   Recall the conventions of \S\ref{S:norm-ge}, and let $\{\ttA^1,\ldots,\ttA^r\} \subset \fh^\tss$ be the basis dual to the simple roots $\{\a_1,\ldots,\a_r\}$.  Then 
\[
  \ttE^\tss_\phi \ = \ \sum \a_i(\ttE_\phi) \ttA^i \,,
  \quad \hbox{with}\quad \a_i(\ttE_\phi) \in \{0,1\} \,.
\]
Define
\[
  \ttT_\phi \ := \ 2 \sum_{ \a_i(\ttE^\tss_\phi) = 0} \ttA^i \,.
\]
If $\mu = \mu^*$, then $U$ is real if and only if $\mu(\ttT_\phi)$ is even, and is quaternionic if and only if $\mu(\ttT_\phi)$ is odd \cite{MR2918237}.

\subsection{Eigenvalues and level of the Hodge structure} 

Set 
\[
  m \ := \ \mu(\ttE_\phi) \tand
  m^* \ := \ \mu^*(\ttE_\phi)\,.
\]
Then the nontrivial eigenvalues of $\ttE_\phi$ on $U$ are 
\[
  \{ m \,,\, m-1 \,,\, m-2 \,, \ldots ,\, 2-m^* \,,\, 1-m^* \,,\, -m^*\} \,.
\]
Equation \eqref{E:V} implies 
\[
  2m\,,\ 2m^* \ \in \ \bZ \,.
\]
The Hodge structure $\phi$ on $V_\bR$ is of level 
\[
  n \ = \ 2 \max\{ m,m^*\} \,.
\]

\subsection{Reductive versus semisimple} \label{S:rvss}

Let $(\fg_\bC,\ttE_\phi,\mu)$ be a triple underlying a Hodge representation; $\fg_\bC$ is a complex reductive Lie algebra, $\ttE_\phi \in \fg_\bC$ is a grading element (determining a real form $\fg_\bR$, \S\ref{S:gerf}), and $\mu$ is the highest weight of an irreducible $\fg_\bC$--module $U = U_\mu$.  The purpose of this section is to observe that such triples are equivalent to tuples $(\fg_\bC^\tss,\ttE^\tss_\phi,\mu^\tss,c)$ with $\fg^\tss_\bC$ a complex semisimple Lie algebra, $\ttE^\tss_\phi \in \fg_\bC^\tss$ a grading element, $\mu^\tss$ the highest weight of an irreducible $\fg_\bC^\tss$--module, and $c \in \bQ$.

Recall the notations of \S\ref{S:norm-ge}.  As discussed in Remark \ref{R:mu}, the central factor $\ttE'_\phi$ acts on the irreducible $U$ by a scalar 
\[
  c \ = \ \mu(\ttE_\phi') \ \in \ \bQ \,,
\]
and on $U^*$ by $-c$.  It follows that $(\fg_\bC,\ttE_\phi,\mu)$ and $(\fg^\tss_\bC,\ttE^\tss_\phi , \mu^\tss, c)$ carry the same data.  (Here $\mu^\tss = \left.\mu\right|_{\fh^\tss}$ is the highest weight of $U$ as a $\fg_\bC^\tss$--module.)  

As noted in Remark \ref{R:mu}, $\fg_\bC = \fg^\tss_\bC$ is semisimple if and only if $c=0$.

The remainder of this section is devoted to discussing the relationship between the $\ttE^\tss_\phi$--eigenspace decomposition of $U$ and the Hodge decomposition (\S\ref{S:basics}) of $V_\bR$.

Let 
\begin{equation}\label{E:ssd}
  U \ = \ U_{\mu(\ttE^\tss_\phi)} \,\op\cdots\op\, U_{-\mu^*(\ttE^\tss_\phi)}
\end{equation}
be the $\ttE^\tss_\phi$--eigenspace decomposition of $U$.  We have
\[
  m \ = \ \mu(\ttE_\phi) \ = \ \mu(\ttE^\tss_\phi) \,+\, c \,.
\]  
Likewise, the $\ttE^\tss_\phi$--eigenspace decomposition of $U^*$ is
\[
  U^* \ = \ U^*_{\mu^*(\ttE^\tss_\phi)} \,\op\cdots\op\, U^*_{-\mu(\ttE^\tss_\phi)} \,.
\]
It is a general fact from representation theory that $\mu(\ttE^\tss_\phi)$ and $-\mu^*(\ttE^\tss_\phi)$ are both elements of $\bQ$, and any two nontrivial $\ttE^\tss_\phi$--eigenvalues of $U$ differ by an integer.  
\begin{a_list}
\item \label{i:real}
If $U_\mu$ is real, then $\mu = \mu^*$ and $V_\bC = U_\mu$ imply that $c=0$ and 
\[
  V^{p,q} \ = \ U_{(p-q)/2} \,.
\]
(In this case, we have $\fz = 0$.)
\item \label{i:cpx-quat}
If $U$ is complex or quaternionic, then 
\[
  V^{p,q} \ = \ U_{(p-q)/2-c} \,\op\, U^*_{(p-q)/2+c} \,.
\]
\end{a_list}

\begin{remark}\label{R:ev}
From \eqref{E:ssd}, we see that the number of nontrivial $\ttE$--eigenvalues for $U_\mu$ is precisely $e(\mu,\ttE) = (\mu+\mu^*)(\ttE) + 1$.  By \eqref{E:V} and \eqref{E:VvU}, we have $e(\mu,\ttE) \le n+1$.  And by Remark \ref{R:phinot0}, $e(\mu,\ttE) \ge 2$.  Thus
\[
  2 \ \le \ e(\mu,\ttE) \,=\, (\mu+\mu^*)(\ttE) + 1 \ \le \ n+1 \,.
\]
\end{remark}

\section{Identification of Hodge domains: general strategy}\label{S:gen}

\subsection{Main result}

Given a complex semisimple Lie algebra $\fg_\bC$ with Cartan subalgebra $\fh \subset \fg_\bC$, and irreducible $\fg_\bC$--representation $U$ and a rational number $c \in \bQ$, let $\ttE' = c\,\tId \in \tEnd(U)$ be the operator acting on $U$ by scalar multiplication.  We specify that $\ttE' = -c\,\tId \in \tEnd(U^*)$ act by $-c$ on the dual representation.  Then 
\[
   \tilde \fg_\bC \ = \ \fg_\bC \op \tspan_\bC\{ \ttE'\}
\] 
is a reductive Lie algebra (semisimple if $c=0$), with semisimple factor $\fg_\bC$ and center $\fz$ spanned by $\ttE'$.  (We are essentially making a change of notation here, replacing the reductive/semisimple pair $\fg_\bC$, $\fg_\bC^\tss$ of the previous sections with (possibly) reductive/semisimple pair $\tilde\fg_\bC$, $\fg_\bC$.  This is done for notational simplicity: it is cleaner to drop the ${}^\tss$ superscript.)

The upshot of the discussions in \S\S\ref{S:ge}--\ref{S:rvss} is

\begin{theorem}[{Green--Griffiths--Kerr \cite{MR2918237}}] \label{T:strategy}
In order to identify the Hodge representations \eqref{E:Hrep} with specified Hodge numbers $\bh = (h^{n,0},\ldots,h^{0,n})$, it suffices to identify tuples $(\fg_\bC,\ttE,\mu,c)$ consisting of a complex semisimple Lie algebra $\fg_\bC$, a grading element $\ttE \in \fh \subset \fg_\bC$ \emph{(as in \S\ref{S:gerf} and \S\ref{S:norm-ge})}, the highest weight $\mu \in \fh^*$ of an a irreducible $\fg_\bC$--module $U$, and $c \in \bQ$ that satisfy the following conditions: $m := \mu(\ttE) + c \in \half \bZ$, and the irreducible representation $V_\bR$ \emph{(\S\ref{S:red2irred})} of the real form $\fg_\bR$ determined by $\ttE$ \emph{(\S\ref{S:gerf})} has $(\ttE + \ttE')$--eigenspace decomposition of the form \eqref{E:V} with $\tdim\,V_{(p-q)/2} = h^{p,q}$.
\end{theorem}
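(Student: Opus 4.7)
The plan is to view the theorem as a synthesis of the material in \S\ref{S:Hrep}--\S\ref{S:rvss}, so the argument is essentially the assembly of two inverse constructions (data from a Hodge representation, and a Hodge representation from data), together with a bookkeeping check that the Hodge numbers are computed by the formulas in \S\ref{S:rvss}.

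First I would handle the forward direction.  Starting from a Hodge representation \eqref{E:Hrep}, the reduction in \S\ref{S:red2irred} lets me assume $V_\bR$ is irreducible, and the Schur lemma bound \eqref{E:z} gives $\tdim\,\fz \in \{0,1\}$.  Decomposing $\fg = \fz \op \fg^\tss$ as in \S\ref{S:norm-ge} and writing $\ttE_\phi = \ttE' + \ttE^\tss$, I extract:
\begin{i_list}
\item the complex semisimple Lie algebra $\fg_\bC = \fg^\tss \ot_\bR \bC$ (renamed so as to drop the $\tss$, as announced just before the theorem statement);
\item the grading element $\ttE := \ttE^\tss \in \fg_\bC$, normalized so that $\a_j(\ttE) \in \{0,1\}$ as in \S\ref{S:norm-ge};
\item the highest weight $\mu$ of the irreducible $\fg_\bC$--module $U$ attached to $V_\bR$ via \eqref{E:VvU};
\item the rational scalar $c = \mu(\ttE')$ by which $\ttE'$ acts on $U$, as in Remarks \ref{R:mu} and \S\ref{S:rvss}.
\end{i_list}
The integrality condition $m = \mu(\ttE) + c \in \half\bZ$ is exactly the condition noted in \S\ref{S:gehr} that the $(\ttE+\ttE')$--eigenvalues on $V_\bC$ lie in $\half\bZ$.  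The Hodge number condition $\tdim\,V_{(p-q)/2} = h^{p,q}$ is just the definition of $\bh$ applied to the Hodge decomposition induced by $\phi$ (\S\ref{S:basics}).

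Next I would handle the reverse direction.  Given a tuple $(\fg_\bC,\ttE,\mu,c)$ satisfying the stated conditions, \S\ref{S:gerf} produces a \emph{unique} real form $\fg_\bR$ of $\fg_\bC$ carrying the weight--zero Hodge structure \eqref{E:ghs} polarized by $-\kappa$.  Adjoining to $\fg_\bC$ the one--dimensional center $\fz = \tspan_\bC\{\ttE'\}$ with $\ttE'$ acting by $c$ on $U_\mu$ and by $-c$ on $U_\mu^*$ (as in the paragraph preceding the theorem) gives a reductive $\tilde\fg_\bC$ with grading element $\ttE + \ttE'$; by construction its nonzero eigenvalues on $U$ lie in $\half\bZ$.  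The irreducible real representation $V_\bR$ is recovered from $U$ by the trichotomy \eqref{E:VvU}, and \S\ref{S:rcq} dictates whether $U$ is real, complex, or quaternionic (distinguishing the three cases from $\mu$ vs.~$\mu^*$ and the parity of $\mu(\ttT_\phi)$).  By \S\ref{S:gehr} the grading element $\ttE + \ttE'$ now defines the circle $\phi:S^1 \to \tilde G_\bR$ by $\phi(z)v = z^{p-q}v$ on $V^{p,q}$, and the eigenspace formulas \emph{(a)} and \emph{(b)} of \S\ref{S:rvss} yield a Hodge decomposition of $V_\bC$ with $\tdim\,V^{p,q} = h^{p,q}$.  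A polarization $Q$ is produced from the standard $\fg_\bC$--invariant pairing on $U \op U^*$ (or on $U$, in the real case), with signs chosen so that $Q$ polarizes the Hodge structure; this is the construction recalled in \cite{MR2918237} and reviewed in \S\ref{S:basics}.

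These two constructions are inverse to each other by inspection, since each step in one direction is the literal inverse of the corresponding step in the other.  The only real pitfall I anticipate is the polarization step: one must check that the $G_\bR$--invariant pairing on $V_\bR$ extracted from $U \op U^*$ actually has the correct sign behavior with respect to the Weil operator, which amounts to matching the real structure \eqref{E:VvU} against the parity conditions of \S\ref{S:rcq}.  Everything else is either a direct invocation of the equivalences recorded in \S\ref{S:ge} and \S\ref{S:rvss} or a routine dimension count using the $\ttE^\tss$--eigenspace decomposition \eqref{E:ssd}.
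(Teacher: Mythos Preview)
Your proposal is correct and matches the paper's approach: the paper presents Theorem~\ref{T:strategy} not with a separate proof environment but as ``the upshot of the discussions in \S\S\ref{S:ge}--\ref{S:rvss},'' and your write-up is precisely that synthesis, spelling out the forward and reverse constructions that those sections encode.  Your version is in fact more explicit than the paper's, which leaves the assembly to the reader.
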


\begin{example}[Period domains] \label{eg:pd}
The domain $D_\phi$ is a period domain if and only if the tuple $(\fg_\bC,\ttE,\mu,c)$ is of one of the following two forms:
\begin{i_list}
\item
$\fg_\bC = \fsp_{2r}\bC$; $\mu = \w_1$, so that $U = \bC^{2r}$ is the standard representation; $\a_r(\ttE) = 1$ and $c=0$.
\item 
$\fg_\bC = \fso_m\bC$; $\mu = \w_1$, so that $U = \bC^{m}$ is the standard representation; and $c=0$.  If $m = 2r$ is even, then we also have $(\a_{r-1}+\a_r)(\ttE) \in \{0,2\}$.
\end{i_list}
\end{example}

\begin{example}[Weight $n=1$]\label{eg:wt1}
The weight $n=1$ Hodge representations well understood \cite{MR546620, MR2192012}.  The corresponding tuples $(\fg_\bC , \ttE , \mu , c)$ are
\begin{i_list}
\item
$(\fsp_{2r}\bC , \ttA^r , \w_1 , 0 )$, with $\tilde\fg_\bR = \fg_\bR = \fsp_{2r}\bR$.  The corresponding Hodge domain $D$ is the period domain $\cD$ parameterizing polarized Hodge structures with $\bh = (r,r)$.
\item
$(\fsl_{r+1}\bC , \ttA^1 , \w_i , \frac{i}{r+1}-\frac{1}{2})$,
with $\tilde\fg_\bR = \fg_\bR = \fsu(1,r)$ if $2i = r+1$, and $\tilde\fg_\bR = \fu(1,r)$ otherwise.
\item
$(\fsl_{a+b}\bC,\ttA^a , \w_1 , \half - \frac{b}{a+b})$, with 
$\tilde\fg_\bR = \fg_\bR = \fsu(a,a)$ if $a=b$, and $\tilde\fg_\bR = \fu(a,b)$ otherwise.
\item
$(\fso_{m+2}\bC , \ttA^1 , \w_r , 0)$, with $m+2 \in \{2r,2r+1\}$ and $\tilde\fg_\bR = \fg_\bR = \fso(2,m)$.
\item
$(\fso_{2r}\bC , \ttA^r , \w_1 , 0 )$, with $\tilde\fg_\bR = \fg_\bR = \fso^*(2r)$.
\end{i_list}
\end{example}

\begin{remark} \label{R:redundancy}
Note that the two tuples $(\fg_\bC,\ttE,\mu,c)$ and $(\fg_\bC,\ttE,\mu^*,-c)$ determine the same Hodge representation (\S\S\ref{S:red2irred} \& \ref{S:rvss}).
\end{remark}

\begin{remark} \label{R:irred} 
One consequence of Remark \ref{R:obvious} is that in any particular example -- that is, the case of a fixed period domain $\cD$ with specified Hodge numbers $\bh$ -- it suffices to identify the \emph{irreducible} Hodge domains $D$ with Hodge numbers $\bh' \le \bh$.  For example, in \S\ref{S:eg}, where we consider the case that $\bh = (2,h^{1,1},2)$, it will suffice to consider the two cases that $\bh' = (1,h,1)$ and $\bh' = (2,h,2)$ with $h \le h^{1,1}$.
\end{remark}

\subsection{Horizontal Hodge domains} \label{S:horiz}

Theorem \ref{T:strategy} identifies all the Hodge subdomains $D$ of the period domain $\cD_\bh$.  We are especially interested in the horizontal subdomains, which are necessarily Hermitian.  These are the domains that satisfy the infinitesimal period relation (IPR, a.k.a.~Griffiths' transversality).  These distinguished subdomains may be identified as follows.

It is a consequence of the normalization in \S\ref{S:norm-ge} that the Hodge subdomain $D \subset \cD$ is horizontal if and only if the induced Hodge decomposition \eqref{E:ind} is of the form
\begin{equation}\label{E:herm}
  \fg_\bC \ = \ \fg^{1,-1}_\phi \,\op\,\fg^{0,0}_\phi \,\op\, \fg^{-1,1}_\phi \,;
\end{equation}
that is, $\fg^{\ell,-\ell}_\phi = 0$ for all $|\ell| \ge 2$, cf.~\cite{MR2532439}, \cite[\S\S2--3]{MR3217458}.  This is a condition on the grading element:
\[
  \tilde\a(\ttE_\phi) \ = \ 1 \,,
\]
where $\tilde\a$ is the highest root.  All such domains are necessarily Hermitian symmetric.  

For the simple, complex Lie groups $\fg_\bC$ the set of all such grading elements (see \S\ref{S:rcq} for notation), the corresponding compact duals $\check D$, the real forms $\fg_\bR$, and the maximal compact subalgebra $\fk \subset \fg_\bR$ are listed in Table \ref{t:herm}.
\begin{table}[b]
\caption{Data underlying irreducible Hermitian symmetric Hodge domains}
\[
\begin{array}{|ccccc|}
  \hline
  \fg_\bC & \ttE & \check D = G_\bC/P_\ttE & \fg_\bR & \fk \\ \hline
  \fsl(a+b,\bC) & \ttA^a & \tGr(a,\bC^{a+b}) & \fsu(a,b) 
  & \mathfrak{s}(\fu(a) \op \fu(b)) \\
  \fso(d+2,\bC) & \ttA^1 & \cQ^d & \fso(2,d) 
  & \mathfrak{s}(\fo(2) \op \fo(d)) \\
  \fsp(2r,\bC) & \ttA^r & \tGr^Q(r,\bC^{2r}) & \fsp(2r,\bR) 
  & \fu(r) \\
  \fso(2r,\bC) & \ttA^r & \hbox{Spinor variety} & \fso^*(2r) 
  & \fu(r) \\
  \fe_6 & \ttA^6 & \hbox{Cayley plane} & \mathrm{E III} 
  & \fso(10) \op \bR \\
  \fe_7 & \ttA^7 & \hbox{Freudenthal variety} & \mathrm{E VII} 
  & \fe_6 \op \bR \,.
  \\ \hline
\end{array}
\]
\label{t:herm}
\end{table}
Here $\tGr(a,\bC^{a+b})$ is the grassmannian of $a$--planes in $\bC^{a+b}$, $\cQ^d \subset \bP^{d+1}$ is the quadric hypersurface, and $\tGr^Q(r,\bC^{2r})$ is the Lagrangian grassmannian of $Q$--isotropic $r$--planes in $\bC^{2r}$.  The following proposition is immediate and well-known.

\begin{proposition} \label{P:horiz}
If $(\fg_\bC,\ttE,\mu,c)$ is a tuple indexing a Hodge representation \eqref{E:Hrep} \emph{(cf.~Theorem \ref{T:strategy})}, then the resulting Hodge domain $D_\phi$ is horizontal if and only if $(\fg_\bC,\ttE)$ is a sum of those pairs listed in Table \ref{t:herm}.
\end{proposition}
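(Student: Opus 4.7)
The substantive content of the proposition is packaged in the criterion, stated immediately before it, that $D_\phi \subset \cD$ is horizontal if and only if $\tilde\alpha(\ttE_\phi) = 1$, where $\tilde\alpha$ is the highest root. So the proposition becomes a pure classification: enumerate the pairs $(\fg_\bC,\ttE)$ with $\ttE$ a grading element satisfying (i)~the normalization $\alpha_j(\ttE) \in \{0,1\}$ for every simple root $\alpha_j$ of \S\ref{S:norm-ge}, and (ii)~$\tilde\alpha(\ttE) = 1$; then match the resulting real forms $\fg_\bR$ against Table \ref{t:herm}.

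\emph{Step 1: reduction to simple factors.}  Decompose $\fg_\bC = \bigoplus_i \fg_\bC^{(i)}$ into simple ideals. Any grading element decomposes as $\ttE = \sum_i \ttE^{(i)}$ with $\ttE^{(i)} \in \fg_\bC^{(i)}$, and the induced Hodge decomposition on $\fg_\bC$ is the direct sum of those on the $\fg_\bC^{(i)}$. Hence \eqref{E:herm} holds for $(\fg_\bC,\ttE)$ if and only if it holds for every summand $(\fg_\bC^{(i)},\ttE^{(i)})$, justifying the word ``sum'' in the statement.  It suffices to treat $\fg_\bC$ simple.

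\emph{Step 2: enumeration.}  For $\fg_\bC$ simple, write $\tilde\alpha = \sum_{i=1}^r n_i\,\alpha_i$ with $n_i \in \bZ_{\ge 1}$ the marks of the highest root.  Evaluating on $\ttE$ gives $\sum_i n_i\,\alpha_i(\ttE) = 1$, and since $\alpha_i(\ttE) \in \{0,1\}$, exactly one $\alpha_{i_0}(\ttE) = 1$, with $n_{i_0}=1$ and all other $\alpha_i(\ttE)=0$.  Consequently $\ttE = \ttA^{i_0}$, and one need only read off from the classical tables of highest-root expansions the nodes of the Dynkin diagram with mark $1$: every node in type $A_r$; the node $\alpha_1$ in type $B_r$; the node $\alpha_r$ in type $C_r$; the three nodes $\alpha_1,\alpha_{r-1},\alpha_r$ in type $D_r$ (the last two producing the spinor variety via the diagram involution); the nodes $\alpha_1,\alpha_6$ in type $E_6$ (equivalent under the diagram involution, yielding the Cayley plane); the node $\alpha_7$ in type $E_7$; and no nodes at all in types $E_8, F_4, G_2$.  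These are precisely the entries in the second column of Table \ref{t:herm}.

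\emph{Step 3: identifying the real form.}  By \S\ref{S:gerf}, the real form $\fg_\bR$ attached to $(\fg_\bC,\ttE)$ is characterized by the requirement that $\bigoplus_\ell \fg^{2\ell,-2\ell}_\phi$ be the complexification of a maximal compact subalgebra $\fk \subset \fg_\bR$.  Under \eqref{E:herm} the only ``even'' piece is $\fg^{0,0}_\phi$, so $\fk_\bC = \fg^{0,0}_\phi$.  The real forms in the fourth column of Table \ref{t:herm} are then read off from the standard dictionary between a Hermitian symmetric pair $(\fg_\bR,\fk)$ and the marked Dynkin node $i_0$, which also identifies the compact dual $\check D = G_\bC/P_\ttE$ as the corresponding cominuscule generalized Grassmannian in column three.

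I expect no real obstacle: the horizontality criterion $\tilde\alpha(\ttE)=1$ does the heavy lifting, and everything else is a Bourbaki-style table lookup and the well-known classification of irreducible Hermitian symmetric spaces.  Because the proposition makes no assertion about $\mu$ or $c$, nothing beyond the pair $(\fg_\bC,\ttE)$ enters the argument.
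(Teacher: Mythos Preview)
Your proposal is correct and matches the paper's approach exactly: the paper states the criterion $\tilde\alpha(\ttE_\phi)=1$ in the paragraph preceding the proposition, asserts that Table~\ref{t:herm} lists all such pairs for simple $\fg_\bC$, and then declares Proposition~\ref{P:horiz} ``immediate and well-known'' without further argument. You have simply made explicit the mark-one-node enumeration and the reduction to simple factors that the paper leaves implicit.
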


In general the Hodge domains $D \subset \check D$ are cut out by nondegeneracy conditions defined by a Hermitian form $\sH$.  For example, in the case of period domains, the compact dual essentially encodes the first Hodge--Riemann bilinear relation, and the second Hodge--Riemann bilinear relation is the nondegeneracy condition cutting out $D$.  To illustrate this, we describe the Hodge domains for the first three rows of Table \ref{t:herm}.
\begin{numlist}
\item
In the case of $\check D = \tGr(a,\bC^{a+b})$, we note that $\bC^{a+b}$ has an underlying real structure, and we fix a nondegenerate Hermitian form $\sH$ on $\bC^{a+b}$ of signature $(a,b)$.  Then 
\[
  D \ = \ \left\{ E\in\tGr(a,\bC^{a+b}) \ \left| \ 
  \left.\sH\right|_E \hbox{ is pos def} \right.\right\} \,.
\]
\item
In the case that $\check D = \cQ^d = \tGr^Q(1,\bC^{d+2})$ we define a Hermitian form $\sH$ on $\bC^{d+2}$ by $\sH(u,v) = -Q(u,\bar v)$.  Then 
\[
  D \ = \ \left\{ E\in\tGr^Q(1,\bC^{d+2}) \ \left| \ 
  \left.\sH\right|_E \hbox{ is pos def} \right.\right\} \,.
\]
\item
In the case that $\check D = \tGr^Q(r,\bC^{2r})$ we define a Hermitian form $\sH$ on $\bC^{2r}$ by $\sH(u,v) = \bi \,Q(u,\bar v)$.  Then 
\[
  D \ = \ \left\{ E\in\tGr^Q(r,\bC^{2r}) \ \left| \ 
  \left.\sH\right|_E \hbox{ is pos def} \right.\right\} \,.
\]
\end{numlist}

\section{Example: Hodge domains for level $2$ Hodge structures} \label{S:eg}

The purpose of this section is to illustrate the application of the strategy outlined in \S\ref{S:gen} in the case that  $\cD = \cD_\bh$ is the period domain parameterizing $Q$--polarized, (effective) weight two Hodge structures on $V_\bR$ with Hodge numbers 
\[
  \bh \ = \ (h^{2,0},h^{1,1},h^{0,2}) \ = \ (2,h^{1,1},2) \,.
\]
Equivalently, $\varphi \in \cD$ parameterizes Hodge decompositions 
\[
  V_\bC \ = \ V^{2,0} \op V^{1,1} \op V^{0,2} \,,
\]
with
\[
  \tdim_\bC\,V^{2,0} \ = \ 2 \ = \ \tdim_\bC\,V^{0,2} \,.
\]
(We assume throughout that $h^{1,1} = \tdim_\bC\,V^{1,1} \not=0$.)  Geometrically such Hodge structures arise when studying smooth projective surfaces with $p_g=2$.  

We have 
\[  
  \cG_\bR \ = \ \tAut(V_\bR,Q) \ = \ \tO(h^{1,1},4) \,.
\]
As discussed in \S\ref{S:approach} it suffices to identify the irreducible Hodge representations \eqref{E:Hrep} with either $\bh_\phi = (1,h,1)$ or $\bh_\phi = (2,h,2)$, and $h \le h^{1,1}$.  (Each such Hodge representation corresponds to a Hodge subdomain $D = G_\bR \cdot \phi$ of the period domain $\cD = \cD_{\bh_\phi}$ parameterizing $Q$--polarized Hodge structures on $V_\bR$ with Hodge numbers $\bh_\phi$.)   The analysis decomposes into three parts:
\begin{A_list}
\item
We begin with the simplifying assumptions that $\fg_\bC$ is simple and that $D$ is horizontal.  This has the strong computational advantage that we may take the grading element $\ttE$ to be as listed in Table \ref{t:herm}.  The resulting domains are enumerated in Theorems \ref{T:pg=1} and \ref{T:pg=2}.  
\item 
Continuing to assume that $\fg_\bC$ is simple, we turn to the case that horizontality fails; the domains are enumerated in Theorem \ref{T:notH}.
\item
Finally we consider in Theorem \ref{T:ss} the case that $\fg_\bC$ is semisimple (but not simple).
\end{A_list}
Together Theorems \ref{T:pg=1}, \ref{T:pg=2}, \ref{T:notH} and \ref{T:ss} give a complete list of the irreducible Hodge representations \eqref{E:Hrep} with $\bh_\phi \le \bh = (2,h^{1,1},2)$.

\begin{theorem} \label{T:pg=1}
The irreducible Hodge representations \eqref{E:Hrep} with $\fg_\bC$ simple, $\bh_\phi = (1,h,1)$ and horizontal Hodge domain $D\subset \cD_{(1,h,1)}$ are given by the following tuples $(\fg_\bC,\ttE,\mu,c)$:
\begin{i_list_emph}
\item \label{i:1pd}
Period domains: $(\fso(h+2,\bC) , \ttA^1 , \w_1 , 0)$, with $\bh_\phi = (1,h,1)$.
\item \label{i:1ghd-i}
Grassmannian Hodge domains: 
both tuples 
\[
  (\fsl(1+r,\bC),\ttA^1,\w_r,-1/(r+1)) \tand
  (\fsl(1+r,\bC),\ttA^r,\w_1,-1/(r+1))
\]
yield Hodge representations with $\bh_\phi = (1,2r,1)$.
\end{i_list_emph}
\end{theorem}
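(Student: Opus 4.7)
The plan is to combine Theorem \ref{T:strategy} with Proposition \ref{P:horiz}: the horizontality and simplicity assumptions restrict $(\fg_\bC, \ttE)$ to one of the six pairs listed in Table \ref{t:herm}, and the task is to determine, for each such pair, all choices of highest weight $\mu$ and constant $c \in \bQ$ producing an irreducible Hodge representation with $\bh_\phi = (1,h,1)$. Concretely, the Hodge number constraint translates into two conditions on the $(\ttE+\ttE')$--eigenspace decomposition $V_\bC = V_1 \oplus V_0 \oplus V_{-1}$: all eigenvalues lie in $\{-1,0,1\}$, and $\dim V_1 = 1$.

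I would separate the argument according to whether $U = U_\mu$ is real, complex, or quaternionic (\S\ref{S:rcq}). In the real case $V_\bC = U$ and $c = 0$ by Remark \ref{R:mu}, so the condition reduces to $\mu(\ttE) = 1$ together with the requirement that the top $\ttE$--eigenspace of $U$ is one--dimensional. In the complex or quaternionic case $V_\bC = U \oplus U^*$, and Remark \ref{R:redundancy} allows me to assume $V_1 \cap U^* \ne 0$; this forces $\mu^*(\ttE) - c = 1$ with a one--dimensional top $\ttE$--eigenspace on $U^*$, and reduces the remaining constraint to $\mu(\ttE) + c < 1$, equivalently $\mu(\ttE) + \mu^*(\ttE) < 2$. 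In either setting the top $\ttE$--eigenspace is the top stratum of the parabolic filtration of the relevant irreducible module under $P_\ttE$; by the normalization $\a_i(\ttE) \in \{0,1\}$ of \S\ref{S:norm-ge} and the Hermitian property $\a_i(\ttE) = \d_{ik}$ for one distinguished node $k$, this top stratum is one-dimensional precisely when the highest weight cannot be lowered by simple roots $\a_j$ with $j \ne k$ while remaining a weight of the module.

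Next I would traverse the rows of Table \ref{t:herm}. For $(\fso(h+2,\bC), \ttA^1)$ the standard representation $U = U_{\w_1}$ is real with $\mu(\ttA^1) = 1$, yielding the period domain of \emph{(i)}; no other dominant weight satisfies the eigenvalue bound. For $(\fsl(a+b,\bC), \ttA^a)$ I would combine the identity $\w_i(\ttA^a) = \min(i,a) - ia/(a+b)$ with combinatorics on $\Lambda^i \bC^{a+b}$ to show that the dimension condition forces $\mu \in \{\w_1, \w_{a+b-1}\}$ together with $\min\{a,b\} = 1$; solving $\mu^*(\ttE) - c = 1$ then gives $c = -1/(r+1)$ with $r = \max\{a,b\}$, producing the two tuples of \emph{(ii)}. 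The remaining pairs are eliminated by direct inspection: for $(\fsp(2r,\bC),\ttA^r)$ the relation $\w_i(\ttA^r) = i/2$ constrains $\mu = \sum k_i \w_i$ by $\sum k_i\,i = 2$, leaving only $r \le 2$ after the dimension check, and these reduce to case \emph{(i)} via $\fsp_2 \cong \fso_3$ and $\fsp_4 \cong \fso_5$; for $(\fso(2r,\bC),\ttA^r)$, the values $\w_i(\ttA^r) = i/2$ for $i \le r-2$, $\w_{r-1}(\ttA^r) = (r-2)/4$, and $\w_r(\ttA^r) = r/4$ leave only the standard representation (case \emph{(i)}), the $r=4$ spinor representation (case \emph{(i)} via triality), and the $r=3$ spinor representation (case \emph{(ii)} via $\fso_6 \cong \fsl_4$); and for the exceptional pairs $(\fe_6,\ttA^6)$, $(\fe_7,\ttA^7)$ a check of the $27$-- and $56$--dimensional minuscule modules (whose top $\ttA^k$--eigenspaces have dimensions $16$ and $27$, matching the Cayley and Freudenthal tangent spaces) together with the other low-dimensional fundamental representations produces no candidate satisfying both the eigenvalue and dimension constraints.

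The main obstacle will be the combinatorial enumeration for $(\fsp(2r,\bC),\ttA^r)$, $(\fso(2r,\bC),\ttA^r)$, and the two exceptional pairs: the half-integer $\ttA^r$--spectra and abundance of candidate fundamental weights generate many \emph{a priori} possibilities, and eliminating them requires explicit dimension computations on Weyl modules such as $\tSym^2 \bC^{2r}$, $\Lambda^2 \bC^{2r}$, the spin modules, and the minuscule $E_6$, $E_7$ modules. A further source of delicacy is the need to track the exceptional isomorphisms $\fsp_2 \cong \fso_3$, $\fsp_4 \cong \fso_5$, $\fsl_4 \cong \fso_6$, and the $\fso_8$ triality, so that apparently new tuples (for instance $(\fsl_4, \ttA^2, \w_2, 0)$) are correctly identified as reparametrizations of \emph{(i)} or \emph{(ii)} rather than counted as genuine new cases. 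Throughout, a final parity check $\mu(\ttT_\phi) \in 2\bZ$ from \S\ref{S:rcq} confirms that the surviving candidates are real (and not quaternionic) representations of $\fg_\bR$.
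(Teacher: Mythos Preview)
Your proposal is correct and follows essentially the same route as the paper: restrict $(\fg_\bC,\ttE)$ to the pairs in Table~\ref{t:herm} via Proposition~\ref{P:horiz}, split into the real/complex/quaternionic cases of \S\ref{S:prf0}, and then walk through each row eliminating candidates by the eigenvalue bound $(\mu+\mu^*)(\ttE)\le 2$ together with the one--dimensionality of the top eigenspace, tracking the low--rank isomorphisms at the end. One small slip to fix before executing: for $(\fe_6,\ttA^6)$ the $27$--dimensional module $U_{\w_6}$ has $\ttA^6$--eigenspace dimensions $(1,16,10)$ (and $U_{\w_1}$ has $(10,16,1)$), while for $(\fe_7,\ttA^7)$ the $56$--dimensional $U_{\w_7}$ has four eigenspaces of dimensions $(1,27,27,1)$ --- so the top eigenspaces are not $16$ and $27$ as you wrote, and in particular $U_{\w_6}$ for $\fe_6$ does have a one--dimensional top eigenspace and must be excluded instead by the three--eigenvalue/complex argument of \S\ref{S:prf0}\ref{a:CQ} (which forces $h^{2,0}_\phi=2$), while $U_{\w_7}$ for $\fe_7$ is excluded already by $(\w_7+\w_7^*)(\ttA^7)=3>2$.
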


\begin{remark}[Geometric realizations]
Pearlstein and Zhang \cite{MR3914747} have exhibited geometric realizations of $G_\varphi = G_1 \times G_2$ with $G_i$ one of $\tSO(2,h_i)$ or $\tU(1,r_i)$, corresponding to the two cases/factors of Theorem \ref{T:pg=1}.
\end{remark}

\begin{theorem} \label{T:pg=2}
The irreducible Hodge representations \eqref{E:Hrep} with $\fg_\bC$ simple, $\bh_\phi = (2,h,2)$ and horizontal Hodge domain $D\subset \cD_{(2,h,2)}$ are all grassmannian Hodge domains \emph{(corresponding to the first row of Table \ref{t:herm})}, and are given by the following tuples $(\fsl(a+b,\bC) ,\ttA^a,\mu,c)$:
\begin{i_list_emph}
\item \label{i:2ghd-i}
The tuples 
\[
  (\fsl(3,\bC),\ttA^1,\w_2,2/3) \tand
  (\fsl(3,\bC),\ttA^1,\w_1,-2/3)
\]
yield Hodge representations with $\bh_\phi = (2,2,2)$.
\item \label{i:2ghd}
The tuple $(\fsl(r+1,\bC),\ttA^2,\w_1,2/(r+1))$
yields a Hodge representation with $\bh_\phi = (2,2r-2,2)$.
\end{i_list_emph}
\end{theorem}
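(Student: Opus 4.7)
The plan is to combine Proposition \ref{P:horiz} with Theorem \ref{T:strategy}. Horizontality together with the simplicity of $\fg_\bC$ restricts the pair $(\fg_\bC, \ttE)$ to one of the six rows of Table \ref{t:herm}, and the hypothesis $\bh_\phi = (2,h,2)$ translates to the requirement that the eigenvalues of $\ttE + \ttE'$ on $V_\bC$ lie in $\{-1,0,1\}$ with the top eigenspace of dimension $2$. The crucial preliminary observation, via Remark \ref{R:ev}, is that the number of nontrivial $\ttE$-eigenvalues on $U_\mu$ equals $(\mu+\mu^*)(\ttE) + 1 \leq 3$, so $(\mu+\mu^*)(\ttE) \leq 2$. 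This bound drastically restricts the admissible highest weights $\mu$ for each pair $(\fg_\bC, \ttE)$.

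With this bound in hand I would carry out the case analysis row by row. For $(\fsl(a+b,\bC), \ttA^a)$ I would use the formula $\w_i(\ttA^a) = \min(i,a)(a+b-\max(i,a))/(a+b)$ to enumerate candidate $\mu$. The analysis splits according to whether $\mu = \mu^*$ (forcing $c = 0$) or $\mu \neq \mu^*$ (so $V_\bC = U_\mu \oplus U^*_\mu$ with $c$ determined by requiring the top $(\ttE+\ttE')$-eigenvalue to equal $1$). In the latter case the condition $\dim V^{2,0} = 2$ reduces to a small binomial equation whose only integer solutions force $\mu = \w_1$ together with $a = 2$, yielding case \eref{i:2ghd}; the low-rank instance $a+b = 3$ specializes to case \eref{i:2ghd-i}, all up to the redundancy $(\mu,c) \leftrightarrow (\mu^*,-c)$ of Remark \ref{R:redundancy}. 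For the other five rows of Table \ref{t:herm} I would check directly that no admissible tuple $(\mu,c)$ yields $\dim V^{2,0} = 2$: the standard representation of $\fso(d+2,\bC)$ gives $\bh_\phi = (1,h,1)$ (already recorded in Theorem \ref{T:pg=1}(\emph{i})) while the only other weights consistent with the bound, namely $\w_2$ and the spinor, give wrong Hodge numbers by a direct root-space count; for $(\fsp(2r,\bC),\ttA^r)$ the self-duality of all fundamentals together with $\w_i(\ttA^r) = i/2$ restricts $\mu$ to $\{\w_1,\w_2,2\w_1\}$, each of which is ruled out by an explicit weight-multiplicity count; and $(\fso(2r,\bC),\ttA^r)$ (standard, adjoint, and spinor variants) is eliminated analogously.

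The principal obstacle lies in the exceptional pairs $(\fe_6, \ttA^6)$ and $(\fe_7, \ttA^7)$, where weight multiplicities are less transparent than for classical types. The bound $(\mu+\mu^*)(\ttE) \leq 2$ nonetheless confines attention to a short finite list of representations, essentially the minuscule modules (of dimensions $27$ and $56$, respectively) and their small adjoint-like variants, for which the $\ttE$-weight multiplicities can be deduced from the standard branching rules or from published tables; verifying in each case that $\dim V^{2,0} \neq 2$ completes the classification.
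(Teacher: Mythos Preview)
Your proposal is correct and follows essentially the same route as the paper: restrict $(\fg_\bC,\ttE)$ to Table~\ref{t:herm} via Proposition~\ref{P:horiz}, use the bound $(\mu+\mu^*)(\ttE)\le 2$ from Remark~\ref{R:ev}, and run a row-by-row elimination, with the grassmannian row supplying the survivors.  The paper organizes the case split slightly differently---by the trichotomy real/complex/quaternionic together with whether $U_\mu$ has two or three $\ttE$-eigenvalues (\S\ref{S:prf0}(a)--(c)), rather than your $\mu=\mu^*$ versus $\mu\ne\mu^*$ dichotomy---and it lists five explicit $(\mu,\ttA^a)$ sub-cases for $\fsl(a+b,\bC)$ rather than appealing to a single binomial equation, but these are cosmetic differences; the substance of the argument is the same.
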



\begin{theorem} \label{T:notH}
The irreducible Hodge representations \eqref{E:Hrep} with $\fg_\bC$ simple and $\bh_\phi \le \bh = (2,h^{1,1},2)$, for which the Hodge domain $D\subset \cD_\bh$ is \emph{not} horizontal are given by the following tuples $(\fg_\bC,\ttE,\mu,c)$:
\begin{i_list_emph}
\item \label{i:not-pd}
Period domains: $(\fso(h+4,\bC) , \ttA^2 , \w_1 , 0)$, with $\bh_\phi = (2,h,2)$.
\item 
Special Linear contact domains: $(\fsl(r+1,\bC) , \ttA^1+\ttA^r , \w_1 , 0)$, 
with $\bh_\phi = (2,2r-2,2)$.
\item 
Special Linear contact domains: $(\fsl(4,\bC) , \ttA^1+\ttA^3 , \w_2 , 0)$, with $\bh_\phi = (2,2,2)$.
\item \label{i:2spin}
Spinor contact domains: $(\fso(5,\bC),\ttA^2,\w_2,0)$ and $(\fso(7,\bC),\ttA^2,\w_3,0)$, both with $\bh_\phi = (2,4,2)$.  \emph{(The first is quaternionic, the second is real.)}
\item \label{i:2sym}
Symplectic contact domains: $(\fsp(2r,\bC),\ttA^1,\w_1,0)$, with $\bh_\phi = (2,4(r-1),2)$.
\item \label{i:g2}
Exceptional contact domains: $(\fg_2,\ttA^2,\w_1,0)$ with $\bh_\phi = (2,3,2)$.
\end{i_list_emph}
\end{theorem}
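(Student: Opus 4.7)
The plan is to apply Theorem \ref{T:strategy} and enumerate tuples $(\fg_\bC, \ttE, \mu, c)$ with $\fg_\bC$ simple, subject to three constraints: the induced Hodge structure on $V_\bR$ has level exactly $2$ with $h^{2,0} \le 2$; the Hodge domain $D_\phi$ is not horizontal; and the normalization $\alpha_i(\ttE) \in \{0,1\}$ of \S\ref{S:norm-ge}. The non-horizontality condition translates via \eqref{E:herm} to $\tilde\alpha(\ttE) \ge 2$ for the highest root $\tilde\alpha$ of $\fg_\bC$, and combined with the bound $(\mu + \mu^*)(\ttE) \le n = 2$ from Remark \ref{R:ev}, one first verifies (one type at a time) that $\tilde\alpha(\ttE) = 2$ in every surviving case, since $\tilde\alpha(\ttE) \ge 3$ under the normalization forces $(\mu + \mu^*)(\ttE) > 2$ on every nontrivial irreducible representation.

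The bulk of the argument is a case analysis by simple Lie type. For $A_n$: the condition $\tilde\alpha(\ttE) = \alpha_1(\ttE) + \cdots + \alpha_n(\ttE) = 2$ under the normalization forces $\ttE = \ttA^i + \ttA^j$ for some $i < j$. For each such pair and each fundamental weight $\mu = \w_k$, I would compute the $\ttE$-eigenvalue decomposition of $U_{\w_k}$ (using the weight basis $e_m$ for the standard representation) and apply \S\ref{S:rvss} to extract $\bh_\phi$; the constraint $h^{2,0} \le 2$ then isolates the pair $(i,j) = (1,n)$ with $\mu = \w_1$ (case (ii)) together with the special $A_3$ case with $\mu = \w_2$ (case (iii), where $U_{\w_2} = \Lambda^2 \bC^4$ is self-dual and real via $A_3 \cong D_3$). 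For the classical types $B_n, C_n, D_n$: the normalization together with $\tilde\alpha(\ttE) = 2$ picks out a specific contact node ($\ttE = \ttA^2$ for $B_n, D_n$, $\ttE = \ttA^1$ for $C_n$). Evaluating on the standard representation yields the period-domain case (i) for $B_n, D_n$ and the symplectic contact case (v) for $C_n$ (where $U$ is quaternionic relative to the real form $\fsp(1, n-1)$); evaluating on the spin representations of $B_n$ (and half-spin of $D_n$) produces $h^{2,0} = 2^{n-2}$, which satisfies the constraint only for small $n$, yielding case (iv) (and for $D_4$ the half-spin cases coincide with the standard via triality, so are absorbed into (i)). For exceptional types: $G_2$ with $\ttE = \ttA^2$ and $\mu = \w_1$ yields case (vi), while $F_4, E_6, E_7, E_8$ are excluded because every non-horizontal grading element forces $h^{2,0} > 2$ on the smallest nontrivial irreducible representation.

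For each surviving tuple, the real form $\fg_\bR$ is read off from $(\fg_\bC, \ttE)$ via \S\ref{S:gerf} by identifying $\fk_\bC = \bigoplus_\ell \fg^{2\ell, -2\ell}_\phi$, and the real/complex/quaternionic type of $U$ is determined via \S\ref{S:rcq}; in every non-horizontal case listed the eigenvalues of $\ttE$ on $U$ are already symmetric around zero, forcing $c = 0$. The main obstacle is the systematic bookkeeping across all simple types, together with careful handling of the accidental isomorphisms $B_2 \cong C_2$, $A_3 \cong D_3$, and triality for $D_4$, which must be invoked to avoid double-counting Hodge representations presented via different Lie algebra labels (for example, case (iv) for $\fso(5)$ with spin $\w_2$ coincides via $B_2 \cong C_2$ with the $r = 2$ instance of case (v) for $\fsp(4)$). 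A secondary subtlety is that the special case (iii) would be missed by a naive enumeration considering only fundamental representations of minimal type (standard, adjoint, spin), and requires singling out the self-dual $\w_2$ of $A_3$.
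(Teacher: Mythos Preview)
Your proposal is correct and would reach the same list, but it takes a more laborious route than the paper. The key difference is how the grading element $\ttE$ is constrained. You argue representation-theoretically: non-horizontality gives $\tilde\alpha(\ttE)\ge 2$, and you then check type-by-type that $\tilde\alpha(\ttE)\ge 3$ is incompatible with $(\mu+\mu^*)(\ttE)\le 2$, leaving $\tilde\alpha(\ttE)=2$. For type $A_n$ this still allows $\ttE=\ttA^i+\ttA^j$ for \emph{any} $i<j$, and you must then rule out the non-contact pairs by computing Hodge numbers. The paper instead uses a geometric shortcut: since the ambient period domain $\cD_{(2,h^{1,1},2)}$ has \emph{contact} IPR, one has $\tdim\,\tilde\fg^{2,-2}_\varphi=1$; and because $\fg^{p,-p}_\phi=\fg_\bC\cap\tilde\fg^{p,-p}_\phi$, non-horizontality forces $\tdim\,\fg^{2,-2}_\phi=1$ directly. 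That pins $\ttE$ down to the known contact gradings (Table~\ref{t:contact}) in one stroke---for $A_n$ this is only $\ttE=\ttA^1+\ttA^n$---and the remaining case analysis is then parallel to the horizontal argument, just with Table~\ref{t:contact} replacing Table~\ref{t:herm}. Your approach has the virtue of being self-contained (it does not invoke the structure of the ambient period domain), while the paper's approach is shorter and more conceptual, and makes transparent \emph{why} only contact gradings appear.
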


\noindent
See \S\ref{S:notH} for further discussion of the domains $D$ appearing in Theorem \ref{T:notH} as homogeneous spaces.

\begin{remark}
The Spinor contact domain given by the tuple $(\fso(5,\bC),\ttA^2,\w_2,0)$ in Theorem \ref{T:pg=2}\emph{\ref{i:2spin}} is a special case of Theorem \ref{T:pg=2}\emph{\ref{i:2sym}} under the isomorphism $\fso(5,\bC) \simeq \fsp(4,\bC)$.
\end{remark}


\begin{theorem}\label{T:ss}
The irreducible Hodge representations \eqref{E:Hrep} with $\bh_\phi \le \bh = (2,h^{1,1},2)$ and $\fg_\bC$ semisimple (but not simple) are given by:
\begin{i_list_emph}
\item
$\fg_\bC = \fsl_2\bC \op \fsl_2\bC$ acting on $U = \bC^2 \ot \bC^2$ with $\ttE = \ttA^1 + \ttA^2$; and
\item
$\fg_\bC = \fsl_2\bC \op \fsp_4\bC$ acting on $U = \bC^2 \ot \bC^4$ with $\ttE = \ttA^1 + \ttA^3$.
\end{i_list_emph}
Each of these Hodge representations is real (implying $c=0$).  The Hodge numbers are $\bh = (1,2,1)$ and $\bh = (2,4,2)$, respectively.
\end{theorem}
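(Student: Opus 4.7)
The plan is to narrow the possibilities by a dimension count and then enumerate.  First I would decompose $\fg_\bC = \bigoplus_{i=1}^k \fg_i$ into simple summands; then the irreducible $U = U_\mu$ factors as $U = \bigotimes_{i=1}^k U_{\mu_i}$ with $\mu = \sum\mu_i$, $\mu^* = \sum\mu_i^*$, and $\ttE = \sum \ttE_i$ with $\ttE_i \in \fg_i$.  I restrict to tuples with each $\ttE_i \neq 0$: a factor with $\ttE_i = 0$ contributes a compact summand to the isotropy algebra and can be removed without changing the Hodge domain.  Because $U_i$ is a nontrivial faithful module for the simple $\fg_i$, any nonzero $\ttE_i$ is non-scalar on $U_i$; since $\a_j(\ttE_i) \in \bZ$, the weights of $U_i$ produce $\ttE_i$--eigenvalues differing by positive integers, giving $(\mu_i + \mu_i^*)(\ttE_i) \ge 1$.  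Remark \ref{R:ev} with $n=2$ then yields
\[
  \sum_{i=1}^k (\mu_i+\mu_i^*)(\ttE_i) \;=\; (\mu+\mu^*)(\ttE) \;\le\; 2,
\]
forcing $k=2$ and each $(\mu_i+\mu_i^*)(\ttE_i) = 1$.  In particular, each $U_i$ carries exactly two $\ttE_i$--eigenvalues, differing by $1$, so each factor $(\fg_i, \ttE_i, \mu_i)$ is the data of a ``level $1$'' Hodge representation of a simple algebra.

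Next I would rule out the complex case.  Suppose $\mu \neq \mu^*$; then $V^{2,0} = U_{1-c} \op U^*_{1+c}$ by \S\ref{S:rvss}\emph{(b)}.  Combining $(\mu+\mu^*)(\ttE) = 2$ with the constraint that the top $\ttE_\phi$--eigenvalue on $V_\bC$ is $1$ forces $\mu(\ttE)+c = 1$ and $\mu^*(\ttE)-c = 1$, so both the top eigenspaces of $U$ and of $U^*$ contribute to $V^{2,0}$.  Writing $a_i, b_i \ge 1$ for the dimensions of the top and bottom $\ttE_i$--eigenspaces of $U_i$, one obtains $\dim V^{2,0} = a_1 a_2 + b_1 b_2$; the bound $\dim V^{2,0} \le 2$ forces $a_i = b_i = 1$, hence $\dim U_i = 2$ and $(\fg_i, U_i) = (\fsl_2\bC, \bC^2)$ for both $i$.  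But the standard $\fsl_2$--representation is self-dual, contradicting $\mu \neq \mu^*$.

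In the remaining self-dual case $\mu_i = \mu_i^*$, so $\mu_i(\ttE_i) = 1/2$ and each $U_i$ is a self-dual irreducible representation of the simple $\fg_i$ with $\ttE_i$--eigenvalues $\pm 1/2$.  Scanning Example \ref{eg:wt1} modulo the exceptional isomorphisms $\fso_3 \cong \fsl_2$, $\fso_5 \cong \fsp_4$, $\fso_6 \cong \fsl_4$, the self-dual simple possibilities, indexed by the dimension $a_i := \dim U_i^{1/2}$ of the top eigenspace, are $(\fsp_{2r}\bC, \ttA^r, \w_1)$ with $a = r$; $(\fsl_{2s}\bC, \ttA^1, \w_s)$ with $a = \binom{2s-1}{s-1}$; the spinor representations $(\fso_{2r+1}\bC, \ttA^1, \w_r)$ with $a = 2^{r-1}$; the half-spin representations $(\fso_{2r}\bC, \ttA^1, \w_r)$ for $r$ even with $a = 2^{r-2}$; and the standard representation of $\fso^*(2r)$ with $a = r$.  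Since $\dim V^{2,0} = a_1 a_2 \le 2$, each $a_i \le 2$, leaving only $(\fsl_2\bC, \w_1)$ with $a = 1$ and $(\fsp_4\bC, \w_1)$ with $a = 2$ as building blocks.  The product bound $a_1 a_2 \le 2$ then excludes $(\fsp_4\bC, \fsp_4\bC)$ and leaves exactly the two tuples asserted.

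Finally I would verify reality and compute the Hodge numbers.  For each tuple, $\ttT_\phi = 2 \sum_{\a_j(\ttE) = 0} \ttA^j$ lies in the summand on which $\mu = \w_1^{(1)} + \w_1^{(2)}$ vanishes, so $\mu(\ttT_\phi) = 0$ is even; by the criterion of \S\ref{S:rcq} the representation is real, hence $c = 0$ and $V_\bC = U$.  Tensoring the $\pm 1/2$ eigenspace decompositions of $U_1$ and $U_2$ then produces $\bh_\phi = (a_1 a_2,\, 2 a_1 a_2,\, a_1 a_2)$, evaluating to $(1,2,1)$ and $(2,4,2)$ respectively.  I expect the main obstacle to be the enumeration in the self-dual step: one must ensure all self-dual simple level $1$ Hodge data with top eigenspace dimension $\le 2$ have been accounted for, and that the apparently distinct Lie-algebraic descriptions (spinor, half-spin, $\fso^*$, exterior-power) collapse to $(\fsl_2\bC, \w_1)$ and $(\fsp_4\bC, \w_1)$ under the exceptional isomorphisms.
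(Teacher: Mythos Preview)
Your argument is correct and follows essentially the same route as the paper's proof in \S\ref{S:ss}: reduce to two simple factors each carrying exactly two $\ttE_i$--eigenvalues, enumerate the admissible simple factors, and then cut down via the bound on $h^{2,0}_\phi$.  The paper packages the enumeration as Proposition~\ref{P:n=2ss} (listing all simple triples $(\fg_i,\ttE_i,\mu_i)$ with two eigenvalues, self-dual or not) and then imposes $\dim T_i \le 4$, whereas you first dispose of the complex case $\mu\neq\mu^*$ directly and then enumerate only the self-dual factors with $a_i\le 2$; both routes land on $(\fsl_2\bC,\w_1)$ and $(\fsp_4\bC,\w_1)$ as the only building blocks.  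Your explicit verification of reality via $\mu(\ttT_\phi)$ and the Hodge-number computation fill in exactly the ``details left to the reader'' in the paper.
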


The remainder of this section is devoted to the proofs of these theorems.  The general argument is outlined in \S\ref{S:prf0}.  Theorems \ref{T:pg=1} and \ref{T:pg=2} are proved simultaneously in \S\S\ref{S:prf-ghd}--\ref{S:prf-fhd}; Theorem \ref{T:notH} is proved in \S\ref{S:notH}; and Theorem \ref{T:ss} is proved in \S\ref{S:ss}.

\subsection{Outline of the arguments} \label{S:prf0}

The proofs of Theorems \ref{T:pg=1} and \ref{T:pg=2} proceed by considering each of the cases listed in Table \ref{t:herm}.  Given the pair $(\fg_\bC,\ttE)$ it suffices to determine when there exists an irreducible $\fg_\bC$--representation $U_\mu$ of highest weight $\mu \in \fh^*$, and $c \in \bQ$ satisfying the conditions of Theorem \ref{T:strategy} for the specified $\bh_\phi$. First note that $U_\mu$ has either two or three nontrivial $\ttE$--eigenvalues; equivalently (Remark \ref{R:ev}), 
\begin{equation}\label{E:wt-rest}
  (\mu + \mu^*)(\ttE) \ \in \ \{ 1,2 \} \,.
\end{equation}
This gives us the following three possibilities (cf.~\S\S\ref{S:rcq} and \ref{S:rvss}): 
\begin{a_list}
\item \label{a:R3}
If $U_\mu$ is real, then $c=0$ (\S\ref{S:rvss}\ref{i:real}) and it is necessary and sufficient that the $\ttE$--eigenspace decomposition (equivalently, the Hodge decomposition) of $V_\bC = U_\mu$ be 
\[
  V^{2,0} \op V^{1,1} \op V^{0,2} \ = \ U_1 \op U_0 \op U_{-1} \,,
\]
with $\tdim\,U_{\pm1} = h^{2,0}_\phi \in \{1,2\}$.  In particular, $\mu(\ttE) = 1$.
\item \label{a:CQ}
If $U_\mu$ is complex or quaternionic, so that $V_\bC = U_\mu \op U_\mu^*$, and there are three nontrivial $\ttE$--eigenvalues, so that the $\ttE$--eigenspace decompositions are 
\begin{eqnarray*}
  U_\mu & = &  U_{\mu(\ttE)} \op U_{\mu(\ttE)-1} \op U_{\mu(\ttE)-2} \\
  U_\mu^* & = & U_{2-\mu(\ttE)} \op U_{1-\mu(\ttE)} \op U_{-\mu(\ttE)} \,.
\end{eqnarray*}
Then we are looking for $c \in \bQ$ so that
\[
  \begin{array}{c|ccc}
  V_\bC & V^{2,0} & V^{1,1} & V^{0,2} \\ \hline
  U_\mu & U_{\mu(\ttE)} & U_{\mu(\ttE)-1} & U_{\mu(\ttE)-2} \\
  U_\mu^* & U_{2-\mu(\ttE)} & U_{1-\mu(\ttE)} & U_{-\mu(\ttE)} \,.
  \end{array}
\]
Equivalently, $\mu(\ttE)+c = 1$ and $2-\mu(\ttE)-c=1$.  That is,
\[
  c \ = \ 1-\mu(\ttE) \,.
\]
Note that each of the eigenspaces $U_{\pm\mu(\ttE)}$ and $U_{\pm(\mu(\ttE)-2)}$ must have dimension one, and we have $h^{2,0} = 2$.  (In particular, this case will not appear in Theorem \ref{T:pg=1}.) 
\item \label{a:C2}
Suppose $U_\mu$ is complex, so that $V_\bC = U_\mu \op U_\mu^*$, and there are two nontrivial $\ttE$--eigenvalues, so that the $\ttE$--eigenspace decompositions are 
\begin{eqnarray*}
  U_\mu & = &  U_{\mu(\ttE)} \op U_{\mu(\ttE)-1} \\
  U_\mu^* & = & U^*_{1-\mu(\ttE)} \op U^*_{-\mu(\ttE)} \,.
\end{eqnarray*}
We are looking for $c \in \bQ$ so that either 
\[
  \begin{array}{c|ccc}
  V_\bC & V^{2,0} & V^{1,1} & V^{0,2} \\ \hline
  U_\mu & U_{\mu(\ttE)} & U_{\mu(\ttE)-1} &  \\
  U_\mu^* &  & U^*_{1-\mu(\ttE)} & U^*_{-\mu(\ttE)} 
  \end{array} \,.
\]
or
\[
  \begin{array}{c|ccc}
  V_\bC & V^{2,0} & V^{1,1} & V^{0,2} \\ \hline
  U_\mu & & U_{\mu(\ttE)} & U_{\mu(\ttE)-1} \\
  U_\mu^* & U^*_{1-\mu(\ttE)} & U^*_{-\mu(\ttE)} &  
  \end{array} \,.
\]
Equivalently, either 
\[
  1 \ = \ \mu(\ttE) + c \tand \tdim_\bC\,U_{\mu(\ttE)} 
  = h^{2,0}_\phi \in \{1,2\} \,,
\]
or 
\[
  \mu(\ttE) \ = \  - c \tand \tdim_\bC\,U^*_{1-\mu(\ttE)} 
  = h^{2,0}_\phi \in \{1,2\} \,.
\]
\end{a_list}

\smallskip

\noindent The proofs of Theorems \ref{T:pg=1} and \ref{T:pg=2} now proceed by applying the observations of this section to each pair $(\fg_\bC,\ttE)$ corresponding to a row of Table \ref{t:herm}.

\smallskip

We now turn to the simultaneous proofs of Theorems \ref{T:pg=1} and \ref{T:pg=2} in \S\S\ref{S:prf-ghd}--\ref{S:prf-fhd}, followed by the proofs of Theorems \ref{T:notH} and \ref{T:ss} in \S\ref{S:notH} and \S\ref{S:ss}, respectively.

\subsection{Grassmannian Hodge domains} \label{S:prf-ghd}

We begin with the first row of Table \ref{t:herm} and the pair $(\fg_\bC,\ttE) = (\fsl(a+b,\bC) \,,\, \ttA^a)$.\footnote{Despite what the reader might anticipate, this case/row is the most tedious and painstaking to work through.  This is essentially due to the numerically more complicated relationship between the roots (dual to the basis $\ttA^a$ for the grading elements) and the weights (i.e.~the complexity in the Cartan matrix) for $\fg_\bC = \fsl(a+b,\bC)$.  The other cases \S\S\ref{S:prf-qhd}--\ref{S:prf-fhd} are easier to analyze.}  

The standard representation $U_{\w_1} = \bC^{a+b}$ of $\fg_\bC = \fsl_{a+b}\bC$ admits a decomposition $\bC^{a+b} = A \op B$ with $\tdim\,A = a$ and $\tdim\,B = b$ and such that $A$ is an eigenspace of $\ttE$ with eigenvalue $b/(a+b)$, and $B$ is an eigenspace with eigenvalue $-a/(a+b)$.  It will be helpful to note that the $\ttE$--eigenspace decomposition of $\tw^i\bC^{a+b}$ is 
\begin{equation}\label{E:epA}
  \tw^i(A \op B) \ = \ \bigoplus_{\a+\b=i} 
  (\tw^\a A) \ot (\tw^\b B)\,.
\end{equation}
Fix bases $\{e_1,\ldots,e_a\}$ and $\{e_{a+1},\ldots,e_{a+b}\}$ of $A$ and $B$, respectively.

We assume throughout \S\ref{S:prf-ghd} that $a+b = i+j = k+\ell = r+1$.  Consulting \S\ref{S:prf0} and \S\ref{S:ghd}, we see that the pair $(\mu,\ttE = \ttA^a)$ must be one of the following:
\begin{i_list}
\item \label{i:A1}
$a=1$ and $\mu = \w_i$, any $1 \le i \le r$;
\item \label{i:A2}
$a=1$ and $\mu = \w_i+\w_k$, any $1\le i,k \le r$;
\item \label{i:A3}
$a=2$ and $\mu = \w_i$, any $2 \le i \le r-1$;
\item \label{i:A4}
$\mu \in \{ \w_1 , 2 \w_1\}$, any $2 \le a \le r-1$;
\item \label{i:A5}
$\mu = \w_2$ and any $2 \le a \le r-1$.
\end{i_list}
(This list suppresses some cases that are essentially symmetric with those already listed.  For example $\ttE = \ttA^r$ and $\mu = \w_i$ is symmetric with \ref{i:A1}.)  We proceed to consider each of these five cases.

\smallskip

{\bf \ref{i:A1}} 
Consulting \eqref{E:epA} we see that 
\[
  U_{\w_i} \ = \ \tw^i(A \op B) \ = \ 
  \left( A \ot \tw^{i-1}B \right) \ \op \ \left(\tw^i B \right) \,.
\]
These eigenspaces have dimensions $\left( \binom{r}{i-1} , \binom{r}{i} \right)$.  In order to realize a Hodge representation with $h^{2,0}_\phi \in \{1,2\}$, one of these dimensions must be $1$ or $2$.  

	The first dimension will be one if and only if $r=1$ (which forces $i=1$).  But in this case the representation $U_\mu$ is real, and so the resulting Hodge representation will be weight $n=1$, not the desired weight $n=2$.
	
	The second dimension will be one if and only if $i=r$.  Then the dimensions of the $\ttE$--eigenspaces of $U_{\w_r}$ and $U_{\w_r}^* = U_{\w_1}$ are $(r,1)$ and $(1,r)$, respectively.  The eigenvalue for $\tw^rB \subset U_{\w_r}$ is $-r/(r+1)$.  So setting $c = -1/(r+1)$ gives us a Hodge representation with eigenvalues $\bh_\phi = (1,2r,1)$, yielding Theorem \ref{T:pg=1}\emph{\ref{i:1ghd-i}}.
	
	The first dimension will be two if and only if $i=r=2$.  In this case the dimensions are $(2,1)$, and $U_\mu = U_{\w_2}$ is complex with $U_\mu^* = U_{\w_1} = \bC^3$.  The $\ttE$--eigenspaces of $U_{\mu}^*$ have dimensions $(1,2)$.  We have $\mu(\ttE) = 1/3$.  Setting $c=-1/3$ yields a special case of Theorem \ref{T:pg=1}\emph{\ref{i:1ghd-i}}, and setting $c=2/3$ yields a special case of Theorem \ref{T:pg=2}\emph{\ref{i:2ghd}} (Remark \ref{R:redundancy}).
	
	The second dimension will be two if and only if $r=2$ and $i=1$.  In this case the dimensions are $(1,2)$, and $U_\mu = U_{\w_1} = \bC^3$ is complex with $U_\mu^* = U_{\w_2} = \tw^2\bC^3$.  The $\ttE$--eigenspaces of $U_\mu^*$ have dimensions $(2,1)$.  We have $\mu(\ttE) = 2/3$.  Setting $c=1/3$ yields a special case of Theorem \ref{T:pg=1}\emph{\ref{i:1ghd-i}} (Remark \ref{R:redundancy}).  Setting $c=-2/3$ yields a special case of Theorem \ref{T:pg=2}\emph{\ref{i:2ghd}}.
	
\smallskip

{\bf \ref{i:A2}} 
We have $U_{\w_i+\w_k} \subset (\tw^i\bC^{r+1}) \ot (\tw^k\bC^{r+1})$, with the latter having three distinct $\ttE$--eigenspaces
\begin{eqnarray*}
  (\tw^i\bC^{r+1}) \ot (\tw^k\bC^{r+1}) \
  & = & 
  \left( A \ot A \ot (\tw^{i-1}B) \ot (\tw^{k-1} B) \right)  \\
  & & \op \ 
  \left\{ \begin{array}{c}
		\left( A \ot (\tw^{i-1}B) \ot (\tw^kB) \right) \\ 
		\left( A \ot (\tw^{i}B) \ot (\tw^{k-1}B) \right)
  \end{array} \right. \\
  & & \op \ 
  \left((\tw^{i}B) \ot (\tw^{k} B) \right)
\end{eqnarray*}
The product $(e_1 \wedge\cdots\wedge e_i) \ot (e_1 \wedge\cdots\wedge e_k) \in A \ot A \ot (\tw^{i-1}B) \ot (\tw^{k-1} B)$ is a highest weight vector of $U_{\w_i+\w_k}$.  Without loss of generality $i \le k$.  The products
\[
  (e_1 \wedge\cdots\wedge e_i) \ot (e_1 \wedge\cdots\wedge e_h) \,,\quad
  k \le h \le r+1 \,,
\] 
are all elements of the first eigenspace $U_{\mu(\ttE)} \subset A \ot A \ot (\tw^{i-1}B) \ot (\tw^{k-1} B)$.  Because this eigenspace may have dimension at most $h^{2,0}_\phi \le 2$, we see that $k=r$ (and the eigenspace has dimension at least $2$).  Likewise
\[
  (e_1 \wedge\cdots\wedge e_h) \ot (e_1 \wedge\cdots\wedge e_r) \,,\quad
  i \le h \le r \,,
\] 
are also elements of this eigenspace; and dimension/Hodge number considerations again force $i=k=r$.  The representation $U_{2\w_r}$ is complex, unless $r=1$; if complex, then the associated Hodge representation has $h^{2,0}_\phi > 2$, which is too large.  So we must have $r=1$, in which case $V_\bC = U_{2\w_1} = \tSym^2\bC^2$ is real and we have $\bh_\phi = (1,1,1)$.  However, under the isomorphism $\fsl_2\bC \simeq \fso(3,\bC)$, this is a special case of Theorem \ref{T:pg=1}\emph{\ref{i:1pd}}. 

\smallskip

{\bf \ref{i:A3}} 
In this case we have $\ttE$--eigenspace decomposition
\[
  U_{\w_i} \ = \ \tw^i(\bC^{2+b}) \ = \ 
  \left( (\tw^2A) \ot (\tw^{i-2}B) \right) \ \op \ 
  \left( A \ot (\tw^{i-1}B) \right) \ \op \ 
  (\tw^{i}B) \,.
\]
The condition that the first and third eigenspaces $(\tw^2A) \ot (\tw^{i-2}B)$ and $\tw^iB$ have dimensions 1 or 2 forces $i=b=2$.  Then $U_\mu = U_{\w_1}$ is self-dual and real.  This is a special case of Theorem \ref{T:pg=1}\emph{\ref{i:1pd}} under the isomorphism $\fsl(4,\bC)\simeq \fso(6,\bC)$.  

\smallskip

{\bf \ref{i:A4}}  
If $\mu = \w_1$, then $U_\mu = \bC^{a+b} = A \op B$ is the standard representation.  Recalling the discussion at the beginning of this section we see that we must have either $a=2$ or $b = r+1-a = 2$.  Taking $c = 2/(r+1)$ if $a=2$, and $c=-2/(r+1)$ if $b=2$, yields $\bh_\phi = (2 , 2r-2 , 2)$ and Theorem \ref{T:pg=2}\emph{\ref{i:2ghd}} (Remark \ref{R:redundancy}).

If $\mu = 2\w_1$, then $U_\mu = \tSym^2\bC^{a+b} = (\tSym^2 A) \op (A\ot B) \op (\tSym^2 B)$.  In this case $\tdim_\bC\,\tSym^2A \ge 3 > h^{2,0}_\phi$ is too large.

\smallskip

{\bf \ref{i:A5}} 
If $\mu = \w_2$, then $U_\mu = \tw^2\bC^{a+b} = (\tw^2 A) \op (A \ot B) \op (\tw^2 B)$.  The first and third eigenspaces $\tw^2 A$ and $\tw^2 B$ are constrained to have dimension at most $h^{2,0}_\phi \le 2$.  This forces $a=b=2$.  In this case $U_\mu$ is real and we have Hodge numbers $(1,4,1)$.  This is the special case of Theorem \ref{T:pg=1}\emph{\ref{i:1pd}} that we encountered above in part \ref{i:A3} of the proof.

\subsection{Quadric hypersurface Hodge domains} \label{S:prf-qhd}

We next consider the second row of Table \ref{t:herm} and the pair $(\fg_\bC,\ttE) = (\fso(d+2,\bC) \,,\, \ttA^1)$.  Here we may assume that either $d = 3$ or $d \ge 5$ (else we are in the case considered in \S\ref{S:prf-ghd}).  

\subsubsection{Period domains} \label{S:prf-pd}

If $\mu = \w_1$, so that $U_\mu = \bC^{d+2}$ is the standard representation, and real with respect to $(\fg_\bC, \ttE)$, then $V_\bC = U_{\w_1}$ has eigenspace decomposition $\bC \op \bC^d \op \bC$ with eigenvalues $(1,d,1)$.  Of course, in this case the Hodge domain is the period domain $\cD$ parameterizing $Q$--polarized Hodge structures with Hodge numbers $\bh = (1,d,1)$.  

\subsubsection{Exterior powers} \label{S:prf-ep}

For the analysis that follows, it will be helpful to make the following observations about exterior powers of the standard representation.  Given $2 \le i \le r-1 \le \half d$, the representation $\tw^i \bC^{d+2}$ is real, defines a Hodge representation, and has $\ttE$--eigenspace decomposition 
\begin{eqnarray*}
 \tw^i(\bC \op \bC^d \op \bC) & = & 
 \left( \bC \ot (\tw^{i-1}\bC^d) \right) \\
 & & \op \ 
 \left(  (\bC \ot (\tw^{i-2}\bC^d) \ot \bC ) \ \op \ (\tw^i\bC^d) \right) \\
 & & \op \ \left( (\tw^{i-1}\bC^d) \ot \bC \right)  \,.
\end{eqnarray*}
The dimension $h^{2,0}_\phi$ of the first eigenspace $\bC \ot (\tw^{i-1}\bC^d)$ is $\binom{d}{i-1}$.  We have $h^{2,0}_\phi \in \{ 1,2 \}$ if and only if $i=2$ and $d=2$.  But we are assuming $d\ge 3$.
	
	\smallskip
	
	We assume $\mu \not= \w_1$ for the remainder of \S\ref{S:prf-qhd}.  (The case $\mu = \w_1$ is treated in \S\ref{S:prf-pd}.)  The representation theory of $\fg_\bC = \fso(d+2,\bC)$ depends on the parity of $d$; we begin with $d$ odd.

\subsubsection{The case of $d$ odd}

Assume $d \equiv 1$ mod $2$.  Consulting \eqref{E:wt-rest} and \S\ref{S:qhd}, we see that either $\mu = \w_i$ with $2 \le i \le r-1$, or $\mu \in \{ \w_r , 2\w_r \}$.  In the first case we have $U_{\w_i} = \tw^i\bC^{d+2}$, which is treated in \S\ref{S:prf-ep}.

\smallskip

$\bullet$  
The representation $U_{2\w_r} = \tw^r U_{\w_1} = \tw^r\bC^{d+2}$ has $\ttE$--eigenspace decomposition \[
  U_{2\w_r} \ = \ 
  \left( \bC \ot (\tw^{r-1} \bC^d) \right) \ \op \ 
  \tw^r\bC^d \ \op \ 
  \left( (\tw^{r-1} \bC^d) \ot \bC \right) \,.
\]
The resulting Hodge representation has $h^{2,0}_\phi \ge \tdim_\bC \tw^{r-1} \bC^d \ge 3$, which is too large.

\smallskip

$\bullet$ Likewise, the dimensions $(2^{r-1},2^{r-1})$ of the $\ttE$--eigenspaces in the spinor representation $U_{\w_r}$ are to large, unless $r=2$.  But in this case that representation is real, and the Hodge representation is of weight $1$ (\S\ref{S:prf0}\ref{a:R3}).

\subsubsection{The case of $d$ even}

Assume $d \equiv 0$ mod $2$.  Consulting \eqref{E:wt-rest} and \S\ref{S:qhd}, we see that either $\mu = \w_i$ with $2 \le i \le r-2$, or $\mu \in \{\w_{r-1} , \w_r\} \cup \{ 2\w_{r-1} , \w_{r-1}+\w_r, 2\w_r \}$.  In the first case we have $U_{\w_i} = \tw^i\bC^{d+2}$, which is treated in \S\ref{S:prf-ep}.  Likewise, $U_{\w_{r-1}+\w_r} = \tw^{r-1} \bC^{d+2}$ is treated in \S\ref{S:prf-ep}.

\smallskip

$\bullet$ 
The cases $\mu = \w_{r-1}$ and $\mu = \w_r$ are symmetric, so we treat $\mu = \w_r$ here.  The half-spin representation $U_{\w_r}$ decomposes into two $\ttE$--eigenspaces of dimensions $(2^{r-2},2^{r-2})$.  Since $r \ge 4$, these dimensions are too large to realize a Hodge representation (as in \S\ref{S:prf0}) with $h^{2,0}_\phi \in \{1,2\}$.

\smallskip

$\bullet$ 
Similarly the cases $\mu = 2\w_{r-1}$ and $\mu = 2\w_r$ are symmetric, and we treat $\mu = 2\w_r$ here.  We have $\tw^r \bC^{d+2} = \tw^r\bC^{2r} = U_{2\w_{r-1}} \op U_{2\w_r}$.  The representation $U_{2\w_r}$ decomposes into three $\ttE$--eigenspaces, the first and last of which have dimension $\half\binom{2r-2}{r-1}$.  Again, since $r \ge 4$, these dimensions are too large to realize a Hodge representation (as in \S\ref{S:prf0}) with $h^{2,0}_\phi \in \{1,2\}$.

\subsection{Lagrangian grassmannian Hodge domains} \label{S:prf-lghd}

Consider the third row of Table \ref{t:herm} and the pair $(\fg_\bC,\ttE) = (\fsp(2r,\bC) \,,\, \ttA^r)$.  Here we may assume $r\ge 3$ (else we are in the case considered in \S\ref{S:prf-qhd}.)  Consulting \eqref{E:wt-rest}, \S\ref{S:prf0}\ref{a:R3} and \S\ref{S:lghd}, we see that $\mu$ must be one of $2\w_1 , \w_2$; in each case $U_\bC$ is real.  The $\ttE$--eigenspace decomposition of the standard representation $U_{\w_1} = \bC^{2r}$ is $\bC^r \op \bC^r$; in particular, the dimensions of the eigenspaces are $(r,r)$.

\smallskip

$\bullet$  In the case that $\mu=2\w_1$, the representation $U_{2\w_1} = \tSym^2\bC^r$ has $\ttE$--eigenspace decomposition $(\tSym^2\bC^r) \op (\bC^r \to \bC^r) \op (\tSym^2\bC^r)$.  The dimensions of the eigenspaces are $(\half r(r+1) \,,\, r^2 \,,\, \half r(r+1) )$.  The requirement $\half r(r+1) = h^{2,0}_\phi \in \{1,2\}$ forces $r = 1$, a contradiction.

\smallskip

$\bullet$  In the case that $\mu = \w_2$, we have $U_{\w_2} \op \tspan_\bC\{Q\} = \tw^2\bC^{2r}$, and the dimensions of the $\ttE$--eigenspaces are $(\half r(r-1) \,,\, r^2-1 \,,\, \half r(r-1) )$.  The requirement $\half r(r-1) = h^{2,0}_\phi \in \{1,2\}$ forces $r = 2$, yielding $\bh_\phi = (1,3,1)$.  This case is covered by Theorem \ref{T:pg=1}\emph{\ref{i:1pd}}.

\subsection{Spinor Hodge domains}

Let $(\fg_\bC,\ttE) = (\fso(2r,\bC) \,,\, \ttA^r)$.  We may assume without loss of generality that $r \ge 4$.  Consulting \S\ref{S:prf0} and \S\ref{S:shd}, we see that $\mu$ is restricted to be one: 
\begin{i_list}
\item \label{i:shd}
$\mu \in \{  \w_1 , 2\w_1 , \w_2 \}$, any $r \ge 4$; 
\item \label{i:shd4}
$r=4$ and $\w \in \{ \w_3 , \w_1+\w_3 , 2\,\w_3 , \w_4 \}$;
\item \label{i:shd5}
$r=5$, $\mu \in \{ \w_4 , \w_5 \}$;
\item \label{i:shd6}
$r=6$, $\mu = \w_5$.
\end{i_list}
We consider each of these cases below.  

\smallskip

\ref{i:shd} If $\mu = \w_1$, then $U_\mu$ is the standard representation $\bC^{2r}$, with $\ttE$--eigenspace decomposition $\bC^r \op \bC^r$.   The dimensions $(r,r)$ of the $\ttE$--eigenspaces are too large ($r\ge 4 > 2 \ge h^{1,1}_\phi$).  If $\mu = 2\w_1$, then $U_{2\w_1} \op \tspan_\bC\{Q\} = \tSym^2\bC^{2r}$, and the dimensions $(\half r (r+1) , r^2-1 , \half r(r+1))$ of the $\ttE$--eigenspaces are again too large.  If $\mu = \w_2$, then $U_{\w_2} = \tw^2\bC^{2r} = (\tw^2\bC^r) \op (\bC^r \ot \bC^r) \op (\tw^2\bC^r)$ and the dimensions $(\half r (r-1) , r^2 , \half r (r-1) )$ are again too large.

\smallskip

\ref{i:shd4} Now suppose that $r=4$.  Then the dimensions of the $\ttE$--eigenspaces for the representations in \ref{i:shd4} are $(8,8)$, $(15,26,15)$, $(10,15,10)$ and 
$(1,6,1)$, respectively.  The requirement that $h^{1,1}_\phi \in \{1,2\}$, restricts us to $\mu = \w_4$.  In this case $U_\mu$ is real, and we have Hodge numbers $\bh_\phi = (1,6,1)$.  This is a special case of Theorem \ref{T:pg=1}\emph{\ref{i:1pd}} under an outer automorphism (triality) of $\fso(8,\bC)$ that permutes the weight $\{ \w_1,\w_3,\w_4\}$. of  

\smallskip

\ref{i:shd5} Next take $r=5$.  The $\ttE$--eigenspaces of $U_{\mu_4}$ and $U_{\w_5}$  have dimensions $(5,10,1)$ and $(1,10,5)$, respectively.  These are too large for our desired Hodge numbers $\bh_\phi$.

\smallskip

\ref{i:shd6} Finally, we consider $r=6$ and $\mu = \w_5$.  In this case $U_{\w_5}$ is quaternionic, and the the $\ttE$--eigenspaces have dimensions $(6,20,6)$ so that the Hodge numbers of the associated Hodge representation $V_\bR = U_{\w_5} \op U_{\w_5}$ are $\bh = (12,40,12)$; again these are too large.

\subsection{Cayley Hodge domains} \label{S:prf-chd}

Let $(\fg_\bC,\ttE) = (\fe_6 \,,\, \ttA^6)$.  Consulting \S\ref{S:prf0} and \S\ref{S:chd}, we see that $\mu$ is restricted to be one of $\{ \w_1 \,,\ \w_2\,,\ \w_6\}$.  In each case $(\mu+\mu^*)(\ttE) = 2$, so that $U_\mu$ has three nontrivial eigenvalues.  The dimensions of the $\ttE$--eigenspaces are $(10,16,1)$, $(16,46,16)$ and $(1,16,10)$, respectively.  In each case the first/last is too large ($>2\ge h^{2,0}_\phi)$ to yield a Hodge representation satisfying the desired constraints.

\subsection{Freudenthal Hodge domains} \label{S:prf-fhd}

Let $(\fg_\bC,\ttE) = (\fe_7 \,,\, \ttA^7)$.  Consulting \S\ref{S:prf0} and \S\ref{S:fhd}, we see that $\mu$ is restricted to be  the first fundamental weight $\mu = \w_1$.  In this case the representation $U_{\w_1}$ is real (with respect to $\fg_\bR$) and we have Hodge numbers $(27,79,27)$; the first is too large ($>2 \ge h^{2,0}_\phi$).

\smallskip

This completes the proofs of Theorems \ref{T:pg=1} and \ref{T:pg=2}.

\subsection{When horizontality fails} \label{S:notH}

In this section we prove Theorem \ref{T:notH}.  Computationally the identification of Hodge subdomains for which horizontality fails entails dropping the assumption that the grading element $\ttE$ is of the form listed in Table \ref{t:herm}.  Fortunately, the period domain $\cD_\bh$ parameterizing weight two, polarized Hodge structures with $p_g=2$ is ``close enough'' to the classical Hermitian period domains (for principally polarized abelian varieties and K3s) that we still have strong restrictions on the possible grading elements.  For this period domain the horizontal subbundle $F^{-1}(T\cD_\bh) \subset T\cD_\bh$ (also known as the \emph{infinitesimal period relation} (IPR)) is a contact subbundle.\footnote{In particular, it has corank one.  In the classical case that the period domain is Hermitian the subbundle $F^{-1}( T \cD_\bh) = T\cD_\bh$ has corank zero.  This is the sense in which the period domain $\cD_\bh$ with $\bh = (2,h^{1,1},2)$ is as ``close as one can get to the classical/Hermitian case.''  (We use the notation $F^{-1}(T\cD_\bh)$ for the horizontal subbundle because it is the first subspace in a natural filtration of the holomorphic tangent bundle $T \check\cD_\bh \supset T \cD_\bh$.)}

Any Hodge structure $\varphi \in \cD_\bh$ induces a Hodge structure on the Lie algebra
\[
  \tilde\fg_\bC \ := \ \tEnd(V_\bC,Q) \ = \ \bigoplus_p \tilde\fg^{p,-p}_\varphi
\]
of $\cG_\bR$ as in \S\ref{S:induced}.  Assuming the normalization of \S\ref{S:norm-ge}, the period domain $\cD_\bh$ is Hermitian if and only if $\tilde\fg^{p,-p}_\varphi = 0$ for all $|p|\ge 2$ (\S\ref{S:horiz}).  And the IPR is contact (as in the present example) if and only if $\tilde\fg^{p,-p}_\varphi = 0$ for all $|p| \ge 3$, and $\tdim\,\tilde\fg^{2,-2}_\varphi = 1$.  Given a Hodge representation \eqref{E:Hrep}, since the induced Hodge structure \eqref{E:ghs} on $\fg_\bR$ is given by $\fg^{p,-p}_\phi = \fg_\bC \,\cap\, \tilde\fg^{p,-p}_\phi$, it follows (from the discussion of \S\ref{S:horiz}) that horizontality will fail for the Hodge subdomain $D \subset \cD_\bh$ if and only if the induced Hodge decomposition \eqref{E:ind} is of the form
\begin{equation}\label{E:contact}
  \fg_\bC \ = \ \fg^{2,-2}_\phi \,\op\, \fg^{1,-1}_\phi \,\op\,
  \fg^{0,0}_\phi \,\op\, \fg^{-1,1}_\phi \,\op\, \fg^{-2,2}_\phi \,, 
\end{equation}
with $\tdim_\bC\,\fg^{2,-2}_\phi = 1$.  In this case the grading element is necessarily of the form listed in Table \ref{t:contact}, cf.~\cite[Proposition 3.2.4]{MR2532439}.\footnote{Be aware there are typos in the table of that proposition.}  (See \cite{MR1920389} for remaining notation.)
\begin{table}[b]
\caption{Data underlying irreducible contact Hodge domains}
\[
\begin{array}{|ccccc|}
  \hline
  \fg_\bC & \ttE & \check D = G_\bC/P_\ttE & \fg_\bR & \fk \\ \hline
  \fsl(r+1,\bC) & \ttA^1+\ttA^r & \tFlag(1,r;\bC^{r+1}) & \fsu(2,r-1) 
  & \mathfrak{s}(\fu(2) \op \fu(r-1)) \\
  \fso(d+4,\bC) & \ttA^2 & \tGr^Q(2,\bC^{d+4}) & \fso(4,d) 
  & \mathfrak{s}(\fo(4) \op \fo(d)) \\
  \fsp(2r,\bC) & \ttA^1 & \bP^{2r-1} & \fsp(1,r-1) 
  & \fsp(1) \op \fsp(r-1) \\
  \fe_6 & \ttA^2 & & \mathrm{E II} 
  & \fsu(6) \op \fsu(2) \\
  \fe_7 & \ttA^1 & & \mathrm{E VI} 
  & \fso(12) \op \fsu(2) \\
  \fe_8 & \ttA^8 & & \mathrm{E IX} 
  & \fe_7 \op \fsu(2)  \\
  \ff_4 & \ttA^1 & & \mathrm{F I}
  & \fsp(3) \op \fsu(2) \\
  \fg_2 & \ttA^2 & & \mathrm{G}
  & \fsu(2) \op \fsu(2) \,.
  \\ \hline
\end{array}
\]
\label{t:contact}
\end{table}
For each of the five exceptional cases, the compact dual $\check D = \cG/\cP_\mathtt{i} \inj \bP V_{\w_\tti}$ is a rational homogeneous variety with isotropy group $\cP_\tti$ the maximal parabolic subgroup associated with the grading element $\ttE = \ttA^\tti$.

The proof of Theorem \ref{T:notH} now proceeds as outlined in \S\ref{S:prf0}, the single exception being that we work with Table \ref{t:contact} (not Table \ref{t:herm}).  As it is a straightforward variation on the proof of Theorems \ref{T:pg=1} and \ref{T:pg=2}, the proof is left to the reader; for easy reference, the relevant eigenvalues are listed in \S\ref{S:ev-contact}.

\subsection{When simplicity fails} \label{S:ss}

Here we prove Theorem \ref{T:ss}.  To begin, suppose that $\fg_\bC = \fg_1 \op \fg_2$ factors into the direct sum of two nontrivial ideals.  Then $U = U_\mu$ is necessarily of the form $T_1 \ot T_2$ with $T_i$ an irreducible representation of $\fg_i$ of highest weight $\mu_i$ and $\mu = \mu_1+\mu_2$.  Likewise, $\ttE = \ttE_1 + \ttE_2$, with $\ttE_i$ a grading element of $\fg_i$.  We write
\[
  (\fg_\bC , \ttE , \mu) \ = \ 
  (\fg_1 , \ttE_1 , \mu_1) \,\op\, (\fg_2,\ttE_2,\mu_2) \,.
\]
The Hodge representation will have weight/level $n=2$ if and only if 
\[
  1 \ = \ c \,+\, \mu(\ttE) \ = \ c \,+\, \mu_1(\ttE_1) \,+\, \mu_2(\ttE_2) \,.
\]
Recall Remark \ref{R:ev}, and note that $e(\mu,\ttE) = e(\mu_1,\ttE_1)+e(\mu_2,\ttE_2) \ge 2$.  The hypothesis $e(\mu,\ttE)=2$ forces $e(\mu_i,\ttE_i) = 1$ and $\fg_i$ to be simple.

\begin{proposition} \label{P:n=2ss}
Any semisimple algebra $\fg_\bR$ admitting a Hodge representation of level $n=2$ is either simple, or decomposes as the sum $\fg_\bR = \fg_{1,\bR} \op \fg_{2,\bR}$.  In the latter case, the triples $(\fg_i,\ttE_i,\mu_i)$ are necessarily one of:
\begin{i_list_emph}
\item \label{i:ss1}
$(\fsl_{r+1}\bC , \ttA^a , \w_1)$.  The (standard) representation $U_{\w_1} = \bC^{r+1}$ is real if $r=1$, and complex otherwise.  The $\ttA^a$--eigenspace decomposition is $\bC^{r+1} = \bC^a \op \bC^{r+1-a}$.
\item \label{i:ss2}
$(\fsl_{r+1}\bC , \ttA^1 , \w_a)$.  The representation $U_{\w_a} = \tw^a \bC^{r+1}$ is complex unless $r+1 = 2a$, in which case the representation is real if and only if $a$ is odd.  The $\ttA^1$--eigenspace decomposition $\tw^a \bC^{r+1} = (\bC^1 \ot \tw^{a-1} \bC^r) \op (\tw^a \bC^r)$ is induced by that of $\bC^{r+1} = \bC \op \bC^r$.
\item \label{i:ss3}
$(\fsp(2r,\bC) , \ttA^r , \w_1)$.  The (standard) representation $U_{\w_1} = \bC^{2r}$ is real, and has $\ttA^r$--eigenspace decomposition $\bC^{2r} = \bC^r \op \bC^r$.
\item
$(\fso(2r,\bC) , \ttA_r , \w_1)$.  The (standard) representation $U_{\w_1} = \bC^{2r}$ is quaternionic, and has $\ttA^r$--eigenspace decomposition $\bC^{2r} = \bC^r \op \bC^r$.
\item
$(\fso(2r+1,\bC) , \ttA^1 , \w_r)$.  The (spin) representation $U_{\w_r}$ is real if $\half r(r-1)$ is even, and quaternionic otherwise.  The $\ttA^1$--eigenspace decomposition is $U_{\w_r} = \bC^{2^{r-1}} \op \bC^{2^{r-1}}$.
\item
$(\fso(2r,\bC) , \ttA^1 , \w_r)$.  The (half-spin) representation $U_{\w_r}$ is complex if $r$ is odd.  If $r$ is even, then the representation is real if $\half (r+1) (r-2)$ is even, and quaternionic otherwise.  The $\ttA^1$--eigenspace decomposition is $U_{\w_r} = \bC^{2^{r-2}} \op \bC^{2^{r-2}}$.
\end{i_list_emph}
\end{proposition}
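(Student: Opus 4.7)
The plan is to first complete the reduction set up in the paragraph preceding the proposition, and then classify the admissible triples for a single simple factor. The preceding analysis, applied to $\fg_\bC = \fg_1 \op \fg_2$ with $U = T_1 \ot T_2$ and $\ttE = \ttE_1 + \ttE_2$, shows that the $\ttE$-eigenvalue set on $U$ is the Minkowski sum of the $\ttE_i$-eigenvalue sets on $T_i$, giving $e(\mu,\ttE) = e(\mu_1,\ttE_1) + e(\mu_2,\ttE_2) - 1$. Since Remark \ref{R:ev} and the level-$2$ constraint force $e(\mu,\ttE) \le 3$, while each nontrivially contributing factor has $e(\mu_i,\ttE_i) \ge 2$, we obtain $e(\mu_i,\ttE_i) = 2$ for each simple summand and at most two summands. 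The remaining task is to classify triples $(\fg, \ttE, \mu)$ with $\fg$ simple, $\ttE$ a grading element (normalized as in \S\ref{S:norm-ge}), and $(\mu + \mu^*)(\ttE) = 1$.

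This is precisely the condition defining a weight-one Lie algebra Hodge representation for a simple Lie algebra, and the classification is classical \cite{MR546620, MR2192012} and reproduced as Example \ref{eg:wt1}. The six families (i)--(vi) of the present proposition are exactly the five families of Example \ref{eg:wt1}, with the orthogonal case $\fso_{m+2}\bC$ split according to the parity of $m$. I would verify the classification directly: writing $\mu = \sum c_i \w_i$ with $c_i \in \bZ_{\ge 0}$ and $\ttE = \sum n_j \ttA^j$ with $n_j \in \{0,1\}$, the defining equation reads $\sum_{i,j}(c_i + c_{\sigma(i)}) n_j (C^{-1})_{ij} = 1$, where $\sigma$ denotes the Dynkin involution and $C^{-1}$ the inverse Cartan matrix. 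Nonnegativity and integrality of all quantities produce strong restrictions, and a case-by-case sweep through the Dynkin diagrams $A_r, B_r, C_r, D_r$ returns exactly the six listed families; the exceptional types $E_6, E_7, E_8, F_4, G_2$ contribute no solutions because the structure of the inverse Cartan matrix and the smallest fundamental representations force $(\mu + \mu^*)(\ttE) \ge 2$ for any nontrivial pair.

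Once the list is established, the realness/complex/quaternionic type for each family follows from the criteria in \S\ref{S:rcq}: $U_\mu$ is complex if $\sigma$ does not fix $\mu$ (equivalently $\mu^* \ne \mu$), and otherwise real or quaternionic according to the parity of $\mu(\ttT_\phi)$. This parity reduces to $\half r(r-1) \bmod 2$ for the spin representation of $B_r$ and to $\half(r+1)(r-2) \bmod 2$ for the half-spin representation of $D_r$, after computing the relevant coefficients of $\ttT_\phi$ from the grading element. The eigenspace dimensions are read off from the explicit weight-space decompositions of the representations (standard, exterior power, spin, half-spin). The main difficulty will be the realness bookkeeping in the spin and half-spin cases, where the interplay between the Dynkin involution, the existence of a $\fg$-invariant symmetric or symplectic bilinear form on $U_\mu$, and the Cartan involution defining the real form must be tracked with care; the remaining verifications reduce to direct weight computations.
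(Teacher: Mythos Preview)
Your proposal is correct, and your identification of the per-factor condition $(\mu_i+\mu_i^*)(\ttE_i)=1$ with the weight-one classification of Example~\ref{eg:wt1} is a clean shortcut: the six families of the proposition are exactly the five families listed there, with the orthogonal case $\fso_{m+2}\bC$ split by the parity of $m$. The paper's own proof is minimal---it simply invokes the general method of \S\ref{S:prf0} and the case analyses of \S\S\ref{S:prf-ghd}--\ref{S:prf-fhd}, leaving details to the reader---so your route is essentially the same direct enumeration, just organized more efficiently by recognizing it as a problem already solved in the paper. Your Minkowski-sum count $e(\mu,\ttE)=e(\mu_1,\ttE_1)+e(\mu_2,\ttE_2)-1$ is the right formula (the version in the paragraph preceding the proposition drops the $-1$, which looks like a typo; your version is what is actually needed to conclude that there are at most two simple summands, each with exactly two eigenvalues).
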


\begin{proof}
The proof proceeds as outlined in \S\ref{S:prf0} and demonstrated in \S\S\ref{S:prf-ghd}--\ref{S:prf-fhd}; details are left to the reader.
\end{proof}

Let 
\[
  T_i \ = \ T_{i,\mu_i(\ttE_i)} \ \op \ T_{i,\mu_i(\ttE_i)-1}
  \ = \ T_i' \ \op \ T_i''
\]
be the $\ttE_i$--eigenspace decompositions.  Then the $\ttE$--eigenspace decomposition 
\[
  U \ = \ U_{\mu(\ttE)} \ \op \ U_{\mu(\ttE)-1} \ \op \ U_{\mu(\ttE)-2} 
\]
is given by 
\begin{eqnarray*}
  U_{\mu(\ttE)} & = & T_1' \,\ot\, T_2' \\
  U_{\mu(\ttE)-1} & = & 
  ( T_1' \,\ot\, T_2'' ) \ \op \ 
  (T_1'' \,\op\, T_2' )\\
  U_{\mu(\ttE)-2} & = & 
  T_1'' \,\ot\, T_2'' \,.
\end{eqnarray*}
So in order to obtain a Hodge representation \eqref{E:Hrep} with $p_g = h^{2,0} = 2$ we must have 
\[
 1 \ \le \ \tdim\,T_1' \,\ot\, T_2' \,,\quad 
  \tdim\,T_1'' \,\ot\, T_2'' \ \le \ 2 \,; 
\]
in particular, 
\[
  \tdim\,T_i \ \le \ 4 \,.
\]
Modulo isomorphisms of (low-rank) Lie algebras, this leaves us with \emph{\ref{i:ss1}} and \emph{\ref{i:ss3}} of Proposition \ref{P:n=2ss}.  Theorem \ref{T:ss} now follows from the discussion of \S\ref{S:prf0}; details are left to the reader.

\section{Hodge representations of Calabi--Yau type} \label{S:CY}

We say that a Hodge representation \eqref{E:Hrep} is of \emph{Calabi--Yau type} (or \emph{CY-type}) if the first Hodge number $h^{n,0}_\phi = 1$.  The irreducible CY-Hodge representations with $\fg_\bR$ semisimple are classified in \cite[Proposition 6.1]{MR3217458}.  They are precisely the tuples $(\fg_\bC,\ttE,\mu,c)$ of Theorem \ref{T:strategy} with $c=0$ (\S\ref{S:rvss}), and such that:
\begin{a_list}
\item \label{a:cy}
$\mu^i = 0$ whenever $\a_i(\ttE) = 0$, where $\a_i$ are the simple roots of (the semisimple) $\fg_\bC$ and the $0 \le \mu^i \in \bZ$ are the coefficients of $\mu = \mu^i \w_i$ as a linear combination of the fundamental weights $\w_i$; 
\item 
either the representation is real (equivalently, $U = U^*$ and $\mu(\ttT_\phi)$ is an even integer), or
\item 
$\mu(\ttE_\phi) \not= \mu^*(\ttE_\phi)$, and $U$ is necessarily complex.
\end{a_list}

\begin{remark}\label{R:a}
The condition \ref{a:cy} above is equivalent to the statement that $\tdim\,U_{\mu(\ttE)} = 1$; equivalently, $U_{\mu(\ttE)}$ is a highest weight line.
\end{remark}

\begin{theorem}\label{T:CY}
An irreducible Hodge representation \eqref{E:Hrep} is of CY-type if and only if the corresponding tuple $(\fg_\bC,\ttE,\mu,c)$ of Theorem \ref{T:strategy} has the properties:
\begin{i_list_emph}
\item The condition \eref{a:cy} above holds.
\item If $U_\mu$ is not real (with respect to the semisimple $\fg_\bR$), then $\mu(\ttE)+c > \mu^*(\ttE)-c$.
\end{i_list_emph}
\end{theorem}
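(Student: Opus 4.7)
The plan is to compute the top Hodge piece $V^{n,0}$ directly from the $\ttE$-eigenspace decomposition of $U_\mu$ (and of $U_\mu^*$ when $U_\mu$ is not real), and to determine when $h^{n,0} = \dim V^{n,0}$ equals $1$. The key inputs are the formulas for $V^{p,q}$ recorded in \S\ref{S:rvss}\eref{i:real}--\eref{i:cpx-quat}, together with the standard relations $m = \mu(\ttE)+c$, $m^* = \mu^*(\ttE)-c$ and $n = 2\max\{m,m^*\}$. Throughout I will invoke Remark \ref{R:redundancy} to canonically normalize $m \geq m^*$ without loss of generality.

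If $U_\mu$ is real, then $V_\bC = U_\mu$ and $c = 0$ by \S\ref{S:rvss}\eref{i:real}, so condition (ii) is vacuous and $V^{n,0}$ coincides with the top $\ttE$-eigenspace $U_{\mu(\ttE)}$; by Remark \ref{R:a}, $h^{n,0} = 1$ is equivalent to condition \eref{a:cy}, i.e.~to (i). This recovers \cite[Proposition 6.1]{MR3217458} in the real subcase. If $U_\mu$ is not real, then $V_\bC = U_\mu \oplus U_\mu^*$ and
\[
  V^{n,0} \ = \ U_{n/2-c} \ \oplus \ U^*_{n/2+c} \,.
\]
With the normalization $m \geq m^*$ one has $n/2 = m$, so the first summand is $U_{\mu(\ttE)}$, while the second summand $U^*_{\mu(\ttE)+2c}$ is nonzero if and only if $\mu(\ttE)+2c$ lies in the eigenvalue range $[-\mu(\ttE),\mu^*(\ttE)]$ of $U_\mu^*$; the binding upper bound rearranges precisely to $m \leq m^*$. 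Consequently, if (ii) holds (i.e.~$m > m^*$) then $V^{n,0} = U_{\mu(\ttE)}$ and $h^{n,0}=1$ iff (i) holds, whereas if $m = m^*$ both summands contain highest-weight lines and $h^{n,0} \geq 2$, excluding CY-type and forcing (ii).

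The main subtlety I anticipate is the boundary case $m = m^*$: it must be genuinely ruled out and not merely normalized away, which requires the nonvanishing of the highest-weight $\ttE$-eigenspaces on both $U_\mu$ and $U_\mu^*$. This is automatic since each contains the respective highest-weight line. Beyond this, the argument is a bookkeeping exercise tracking the top $(\ttE+\ttE')$-eigenvalue on $V_\bC$; the novel content relative to \cite[Proposition 6.1]{MR3217458} is simply that a nonzero central contribution $c$ breaks the symmetry between $\mu$ and $\mu^*$. In particular, in the quaternionic subcase (where $\mu = \mu^*$ forces $m = m^* \iff c = 0$) condition (ii) rules out $c = 0$, while condition (i) is unchanged from the semisimple classification.
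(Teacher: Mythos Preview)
Your argument is correct and is exactly the expansion the paper has in mind: the paper's own proof simply says the result is ``straightforward to deduce from the proof of \cite[Proposition~6.1]{MR3217458} and the discussion of \S\ref{S:rvss},'' and you have supplied precisely those details by reading off $V^{n,0}$ from the $(\ttE+\ttE')$--eigenspace formulas in \S\ref{S:rvss}\eref{i:real}--\eref{i:cpx-quat} and invoking Remark~\ref{R:a}.

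One small terminological point worth tightening: your case split (``$U_\mu$ real'' versus ``$U_\mu$ not real'') is reality with respect to the \emph{reductive} $\tilde\fg_\bR$ (this is what \S\ref{S:rvss}\eref{i:real} addresses and what forces $c=0$), whereas condition~(ii) in the theorem is phrased with respect to the \emph{semisimple} $\fg_\bR$.  These differ exactly when $U_\mu$ is real for $\fg_\bR^\tss$ but $c\neq 0$.  Your argument still handles this subcase correctly (it falls under your second case, and since $\mu=\mu^*$ one has $m-m^*=2c\neq 0$, so after normalization $m>m^*$ and $V^{n,0}=U_{\mu(\ttE)}$ as claimed, while (ii) is vacuously satisfied), but the parenthetical ``i.e.~$m>m^*$'' attached to ``(ii) holds'' is not literally accurate there.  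This is cosmetic and does not affect the validity of the proof.
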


\begin{proof}
It is straightforward to deduce the theorem from the proof of \cite[Proposition 6.1]{MR3217458} and the discussion of \S\ref{S:rvss}.  Details are left to the reader.
\end{proof}

\begin{example} \label{eg:K3}
The (rational) Hodge groups of K3 type (CY 2-fold type) were determined by Zarhin \cite{MR697317}.  The corresponding (real) Hodge representations \eqref{E:Hrep} are those with Hodge numbers $\bh_\phi = (1,h,1)$.  The list of all associated tuples (Theorem \ref{T:strategy}) is 
\begin{i_list}
\item
$(\fso_{h+2}\bC , \ttA^1 , \w_1 , 0)$, with $\bh_\phi = (1,h,1)$ and $h \ge 3$.\footnote{The associated domain $D_\phi$ is the period $\cD$ parameterizing polarized Hodge structures of K3-type with $\bh_\phi = (1,h,1)$, cf.~Example \ref{eg:pd}.}
\item 
$(\fsl_2\bC\op\fsl_2\bC , \ttA^1+\ttA^2 , \w_1+\w_2 , 0)$, with $\bh_\phi = (1,2,1)$.\footnote{Recall that $\fso_4\bC = \fsl_2\bC \op \fsl_2\bC$ is semisimple.  Here $D_\phi$ is again the period domain.}
\item
$(\fsl_{r+1}\bC , \ttA^1, \w_1, \frac{1}{r+1})$ with $\bh_\phi = (1,2r,1)$ and $r \ge 2$.
\item
$(\fsl_2\bC , \ttA^1 , 2\w_1 , 0 )$ with $\bh_\phi = (1,1,1)$.
\item
$(\fsl_4\bC,\ttA^2,\w_2 , 0)$, with $\bh_\phi = (1,4,1)$.
\end{i_list}
\end{example}

\begin{example} \label{eg:CY3}
The set of all Hodge representations \eqref{E:Hrep} of CY 3-fold type (Hodge numbers $\bh_\phi = (1,h,h,1)$) is enumerated in \cite{HHan}.  In the case that $D_\phi$ is \emph{horizontal} (and therefore Hermitian) these are of particular interest \cite{MR3189469, MR1258484}.  The corresponding (horizontal) tuples $(\fg_\bC,\ttE,\mu,c)$ of Theorem \ref{T:strategy} are 
\begin{i_list}
\item
$(\fsl_2\bC , \ttA^1 , 3 \w_1 , 0 )$ with $\bh_\phi = (1,1,1,1)$.
\item
$(\fsl_2\bC , \ttA^1 , \w_1 , 0 )^{\op3}$ with $\bh_\phi = (1,3,3,1)$.
\item
$(\fsl_6\bC , \ttA^3 , \w_3 , 0 )$ with $\bh_\phi = (1,9,9,1)$.
\item
$(\fsl_{r+1}\bC , \ttA^1 , \w_1 , \frac{3}{2}-\frac{r}{r+1})$ with $\bh_\phi = (1,r,r,1)$.
\item
$(\fsl_{r+1}\bC , \ttA^1 , 2\w_1 , \frac{3}{2}-\frac{2r}{r+1})$ with $\bh_\phi = (1,h,h,1)$ and $h+1 = \half(r+1)(r+2)$.
\item
$(\fsl_{r+1}\bC , \ttA^2 , \w_2 , \frac{3}{2}-\frac{2(r-1)}{r+1})$ with $\bh_\phi = (1,h,h,1)$ and $h+1 = \half r(r+1)$.
\item
$(\fsl_{r+1}\bC,\ttA^1,\w_1) \op (\fsl_{r'+1}\bC , \ttA^1 , \w_1 )$ and $c=\frac{3}{2} - \frac{r}{r+1} - \frac{r'}{r'+1}$, with $\bh_\phi = (1,h,h,1)$ and $h = r+r'+rr'$.
\item
$(\fsl_2\bC , \ttA^1 , \w_1 , 0 ) \op (\fg'_\bC,\ttE',\mu',c')$ with $\bh_\phi = (1,h'+1,h'+1,1)$, where $(\fg'_\bC,\ttE',\mu',c')$ is any tuple of Example \ref{eg:K3} with Hodge numbers $\bh'=(1,h',1)$.
\item
$(\fsp_6\bC , \ttA^3 , \w_3 , 0 )$ with $\bh_\phi = (1,6,6,1)$.
\item
$(\fso_m\bC , \ttA^1 , \w_1 , 1/2 )$ with $\bh_\phi = (1,m-1,m-1,1)$.
\item
$(\fso_{10}\bC , \ttA^5 , \w_5 , 1/4 )$ with $\bh_\phi = (1,15,15,1)$.
\item
$(\fso_{12}\bC , \ttA^6 , \w_6 , 0)$ with $\bh_\phi = (1,15,15,1)$.
\item
$(\fe_6 , \ttA^6 , \w_6 , 1/6)$ with $\bh_\phi = (1,26,26,1)$.
\item
$(\fe_7 , \ttA^7 , \w_7 , 0)$ with $\bh_\phi = (1,27,27,1)$.
\end{i_list}
\end{example}

\appendix

\section{Duality and eigenvalues for Hodge representations}

For the computations of \S\S\ref{S:eg}--\ref{S:notH} it is useful to make some general observations about the irreducible $\fg_\bC$--representations $U_\mu$ that yield Hodge representations $V_\bR$ for each such pair.  (Those that follow are all elementary consequences of the representation theory of complex, simple Lie algebras and may be found in any standard text.)  Throughout we let $\{ \w_1,\ldots,\w_r\} \subset \fh^*$ denote the fundamental weights of $\fg_\bC$, and write the dominant integral weight $\mu = \mu^i \w_i$ with $0 \le \mu^i \in \bZ$.  

Motivated by the considerations of \S\ref{S:rcq}, we define
\[
  \ttT_i \ := \ 2 \sum_{j\not=i} \ttA^j \,.
\]

\subsection{Duality}

Every representation $U_\mu$ of $\fg_\bC = \fso(2r+1,\bC)$, $\fsp(2r,\bC)$, $\fe_7$, $\fe_8$, $\ff_4$ and $\fg_2$ is self-dual.
\begin{numlist}
\item
A $\fg_\bC = \fsl(r+1,\bC)$ representation $U_\mu$ is self-dual if and only if $\mu^i = \mu^j$ for all $i+j = r+1$.  
\item
A $\fg_\bC = \fso(2r,\bC)$ representation $U_\mu$ fails to be self-dual if and only if $r$ is odd and $\mu^{r-1}\not=\mu^r$.
\item
For $\fg_\bC = \fe_6$, we have $\w_1^* = \w_6$, $\w_2^* = \w_2$, $\w_3^* = \w_5$, $\w_4^* = \w_4$.
\end{numlist}

\subsection{Hermitian symmetric domains} \label{S:ev-herm}

Let $(\fg_\bC,\ttE)$ be the data underlying the irreducible Hermitian symmetric domains (Table \ref{t:herm}).

\subsubsection{Grassmannian Hodge domains} \label{S:ghd}

With the notation in the first row of Table \ref{t:herm}, we have $r+1 = a+b$.  Given Remark \ref{R:ev} it will be useful to note that, given $i+j = r+1$, we have
\begin{eqnarray*}
  (\w_i + \w_j)(\ttA^a) & = &
  \left\{
  \begin{array}{ll}
    a \,,\quad & a \le i \le j  \,,\\
    i \,,\quad & i \le a \le j \,,\\
    b \,,\quad & i \le j \le a \,;
  \end{array}
  \right. \\
  (\w_i+\w_j)(\ttT_a) & = & 2 ij \,-\,(\w_i + \w_j)(2\,\ttA^a) \,.
\end{eqnarray*}

\subsubsection{Quadric Hodge domains} \label{S:qhd}

Here (the second row of Table \ref{t:herm}) the rank $r$ of $\fg_\bC$ is given by $d+2 \in \{ 2r,2r+1\}$.  

In the case that $d\equiv 1$ mod $2$, we have 
\begin{eqnarray*}
  \w_i(\ttA^1) & = &  
  \left\{
  \begin{array}{ll}
    1 \,,\quad & i \le r-1 \,,\\
    \half \,,\quad & i=r \,.
  \end{array}
  \right.\\
  \w_i(\ttT_1) & \equiv & 0 \quad\hbox{mod } \ 2 \,,\quad i \le r-1 \,,\\
  \w_r(\ttT_1) & = & \half (r-1) (r+2) \,.
\end{eqnarray*}
In the case that $d\equiv 0$ mod $2$, we have 
\begin{eqnarray*}
  \w_i(\ttA^1) & = &  
  \left\{
  \begin{array}{ll}
    1 \,,\quad & i \le r-2 \,,\\
    \half \,,\quad & i=r-1,r \,.
  \end{array}
  \right.\\
  \w_i(\ttT_1) & \equiv & 0 \quad\hbox{mod } \ 2 \,,\quad i\le r-2 \,,\\
  \w_{r-1}(\ttT_1) \ = \ \w_r(\ttT_1) & = & \half (r-2)(r+1) \,.
\end{eqnarray*}

\subsubsection{Lagrangian Grassmannian Hodge domains} \label{S:lghd}

We have
\begin{eqnarray*}
  \w_i(\ttA^r) & = & \half i \,,\\
  \w_i(\ttT_r) & \equiv & 0 \quad\hbox{mod } \ 2 \,.
\end{eqnarray*}
In particular, every representation $U_\mu$ is real (\S\ref{S:rcq}), with respect to the data $(\fg_\bC,\ttE) = (\fsp(2r,\bC) \,,\, \ttA^r)$. 

\subsubsection{Spinor Hodge domains} \label{S:shd}

We have 
\[
\begin{array}{rclrcl}
  \w_i(\ttA^r) & = & \half i \,,\quad & 
  \w_i(\ttT_r) & \equiv & i \quad \hbox{mod } \ 2 \,,\quad i \le r-2 \,;\\
  \w_{r-1}(\ttA^r) & = & \fourth (r-2) \,, \quad & 
  \w_{r-1}(\ttT_r) & = & \half(r^2-2r+2) \,;\\
  \w_r(\ttA^r) & = & \fourth r \,, \quad &
  \w_r(\ttT_r) & = & \half r (r-2) \,.
\end{array}
\]

\subsubsection{Cayley Hodge domains} \label{S:chd}

We have   
\[
\begin{array}{rclrcl}
  (\w_1 + \w_6)(\ttA^6) & = & 2 \,,\quad &
  (\w_1 + \w_6)(\ttT_6) & \equiv & 0 \quad\hbox{mod } \ 2 \,,\\
  \w_2(\ttA^6) & = & 1 \,,\quad &
  \w_2(\ttT_6) & \equiv & 0 \quad\hbox{mod } \ 2 \,,\\
  (\w_3 + \w_5)(\ttA^6) & = & 3 \,,\quad &
  (\w_3 + \w_5)(\ttT_6) & \equiv & 0 \quad\hbox{mod } \ 2 \,,\\
  \w_4(\ttA^6) & = & 2 \,,\quad &
  \w_4(\ttT_6) & \equiv & 0 \quad\hbox{mod } \ 2 \,.
\end{array}
\]
In particular, every representation is either real or complex with respect to the data $(\fg_\bC , \ttE) = (\fe_6,\ttA^6)$.

\subsubsection{Freudenthal Hodge domains} \label{S:fhd}

Every representation is self-dual.  We have 
\[
\begin{array}{rclrclrclrcl}
  \w_1(\ttA^7) & = & 1 \,,\quad &
  \w_2(\ttA^7) & = & 3/2 \,,\quad &
  \w_3(\ttA^7) & = & 2 \,,\quad &
  \w_4(\ttA^7) & = & 3 \,,\\
  \w_5(\ttA^7) & = & 5/2 \,,\quad &
  \w_6(\ttA^7) & = & 2 \,,\quad &
  \w_7(\ttA^7) & = & 3/2 \,, 
\end{array}
\]
and $\w_i(\ttT_7) \equiv 0$ mod $2$, for all $1 \le i \le 7$.  So every representation is real (\S\ref{S:rcq}) with respect to the data $(\fg_\bC,\ttE) = (\fe_7,\ttA^7)$.

\subsection{Contact domains} \label{S:ev-contact}

Let $(\fg_\bC,\ttE)$ be the data underlying the irreducible contact Hodge domains (Table \ref{t:contact}).  (Duality of representations is as in \S\ref{S:ev-herm} and so will not be repeated here.)

\subsubsection{Special Linear}

We begin with the first row of Table \ref{t:contact}.  We have $\w_i(\ttA^1 + \ttA^r) = 1$ for all $1 \le i \le r$.  We have $\ttT_\phi = 2 (\ttA^2 + \cdots + \ttA^{r-1})$.  If $r=3$, then  $\w_1(\ttT_\phi) = 1 = \w_3(\ttT_\phi)$ and $\w_2(\ttT_\phi) = 2$.  If $r \ge 4$, then $\w_i(\ttT_\phi) \equiv 0$ mod $2$.

\subsubsection{Orthogonal}

Here (the second row of Table \ref{t:contact}) the rank $r$ of $\fg_\bC$ is given by $d+4 \in \{ 2r,2r+1\}$.

In the case that $d\equiv 1$ mod $2$, we have 
\begin{eqnarray*}
  \w_i(\ttA^2) & = &  
  \left\{
  \begin{array}{ll}
    1 \,,\quad & i=1,r \,,\\
    2 \,,\quad & 2\le i \le r-1 \,.
  \end{array}
  \right.\\
  \w_i(\ttT_2) & \equiv & 0 \quad\hbox{mod } \ 2 \,,\quad i \le r-1 \,,\\
  \w_r(\ttT_2) & \equiv & \half r (r+1) \quad\hbox{mod } \ 2 \,.
\end{eqnarray*}
In the case that $d\equiv 0$ mod $2$, we have 
\begin{eqnarray*}
  \w_i(\ttA^2) & = &  
  \left\{
  \begin{array}{ll}
    1 \,,\quad & i =1,r-1,r \,,\\
    2 \,,\quad & 2 \le i \le r-2 \,.
  \end{array}
  \right.\\
  \w_i(\ttT_2) & \equiv & 0 \quad\hbox{mod } \ 2 \,,\quad i\le r-2 \,,\\
  \w_{r-1}(\ttT_2) \ = \ \w_r(\ttT_2) & \equiv 
  & \half r (r+1) \quad\hbox{mod } \ 2 \,.
\end{eqnarray*}

\subsubsection{Symplectic}

We have
\begin{eqnarray*}
  \w_i(\ttA^1) & = & 1 \,,\\
  \w_i(\ttT_1) & \equiv & i \quad\hbox{mod } \ 2 \,.
\end{eqnarray*}

\subsubsection{Exceptional, rank 6}

We have $(\fg_\bC,\ttE) = (\fe_6 , \ttA^2)$ and   
\[
\begin{array}{rclrcl}
  (\w_1 + \w_6)(\ttA^2) & = & 2 \,,\quad &
  (\w_1 + \w_6)(\ttT_2) & \equiv & 0 \quad\hbox{mod } \ 2 \,,\\
  \w_2(\ttA^2) & = & 2 \,,\quad &
  \w_2(\ttT_2) & \equiv & 0 \quad\hbox{mod } \ 2 \,,\\
  (\w_3 + \w_5)(\ttA^2) & = & 4 \,,\quad &
  (\w_3 + \w_5)(\ttT_2) & \equiv & 0 \quad\hbox{mod } \ 2 \,,\\
  \w_4(\ttA^2) & = & 3 \,,\quad &
  \w_4(\ttT_2) & \equiv & 0 \quad\hbox{mod } \ 2 \,.
\end{array}
\]


\subsubsection{Exceptional, rank 7}

We have $(\fg_\bC,\ttE) = (\fe_7 , \ttA^1)$, with $\w_7(\ttA^1) = 1$ and $\w_i(\ttA^1) \ge 2$ for all $1 \le i \le 6$.  Also $\w_1(\ttT_1) \equiv 0$ mod $2$.


\subsubsection{Exceptional, rank 8}

For $(\fg_\bC,\ttE) = (\fe_8 , \ttA^8)$ we have $\w_i(\ttA^8) \ge 2$ for all $1 \le i \le 8$.


\subsubsection{Exceptional, rank 4}

We have $(\fg_\bC,\ttE) = (\ff_4 , \ttA^1)$ and $\w_i(\ttA^1) \ge 2$ for all $1 \le i \le 4$.


\subsubsection{Exceptional, rank 2}

We have $(\fg_\bC,\ttE) = (\fg_2 , \ttA^2)$, $\w_1(\ttA^2) = 1$ and $\w_2(\ttA^2) = 2$, and $\w_i(\ttT_1) \equiv 0$ mod $2$ for all $1 \le i \le 2$.

\bibliography{HodgeReps.bbl}
\bibliographystyle{alpha}

\end{document}